\titleformat*{\section}{\center\Large\bfseries}
\titlespacing{\section}{0pt}{0pt}{\parskip}
\newtheorem{bigtheorem}{Theorem}
\newtheorem{theorem}{Theorem}[section]
\newtheorem{lemma}[theorem]{Lemma}
\newtheorem{proposition}[theorem]{Proposition}
\newtheorem{corollary}[theorem]{Corollary}
\newenvironment{remark}[1][Remark]{\begin{trivlist}
\item[\hskip \labelsep {\bfseries #1}]}{\end{trivlist}}
\begin{document}

\title{\bf Generation of finite simple groups by an involution and an element of prime order}
\author {Carlisle S. H. King\\
\it{Imperial College} \\
\it{London}\\
\it{United Kingdom}\\
\it{SW7 2AZ}\\
carlisle.king10@ic.ac.uk}
\date{}
\maketitle

\vspace{-1.3cm}

\section*{\center{Abstract}}
We prove that every non-abelian finite simple group is generated by an involution and an element of prime order. 

\vspace{-1.3cm}

\section*{\center{1. Introduction}}
\stepcounter{section}

\par Given a finite simple group $G$, it is natural to ask which elements generate $G$. Results of Miller \cite{Mi1}, Steinberg \cite{St}, Aschbacher and Guralnick \cite{AsGu} prove that every finite simple group is generated by a pair of elements. A natural refinement is then to ask whether the orders of the generating elements may be restricted: given a finite simple group $G$ and a pair of positive integers $(a,b)$, does there exist a pair of elements $x, y \in G$ with $x$ of order $a$ and $y$ of order $b$ such that $G=\langle x, y \rangle$? If such a pair exists, we say $G$ is $(a,b)$-generated. 
\par As two involutions generate a dihedral group, the smallest pair of interest is $(2,3)$. The question of which finite simple groups are $(2,3)$-generated has been studied extensively. All alternating groups $A_n$ except for $n=3,6,7,8$ are $(2,3)$-generated by \cite{Mi1}. All but finitely many simple classical groups not equal to $PSp_4(2^a), PSp_4(3^a)$ are $(2,3)$-generated by \cite{LiSh}. In fact, recent work by Pellegrini \cite{Pe2} completes the classification of the $(2,3)$-generated finite simple projective special linear groups, which shows that $PSL_n(q)$ is $(2,3)$-generated for $(n,q) \ne (2,9), (3,4), (4,2)$. There is also literature on the $(2,3)$-generation of many other simple classical groups $Cl_n(q)$, showing a positive result for large $n$ explicitly listed (for example, see \cite{SaTa}). All simple exceptional groups except for ${}^2B_2(2^{2m+1})$ (which contain no elements of order 3) are $(2,3)$-generated by \cite{LuMa}. And all sporadic simple groups except for $M_{11}, M_{22}, M_{23}$ and $McL$ are $(2,3)$-generated by \cite{Wo}.
\par Nevertheless, the problem of determining exactly which finite simple groups are $(2,3)$-generated, or more generally $(2,p)$-generated for some prime $p$, remains open. In this paper, we prove:

\begin{bigtheorem}
Every non-abelian finite simple group $G$ is generated by an involution and an element of prime order.
\end{bigtheorem}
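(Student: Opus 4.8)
The plan is to run through the classification of finite simple groups, in each case exhibiting a prime $p$, an involution $t$ and an element $x$ of order $p$ with $\langle t,x\rangle=G$. The alternating groups are almost free: by the $(2,3)$-generation results of Miller and others quoted above, $A_n$ is $(2,3)$-generated for every $n\neq 3,6,7,8$, and of the exceptions $A_3$ is abelian while $A_6\cong\mathrm{PSL}_2(9)$, $A_7$ and $A_8\cong\mathrm{PSL}_4(2)$ are each $(2,p)$-generated for a suitable small prime $p$, as one reads off from the \emph{Atlas}. The sporadic groups are likewise a finite problem: by Woldar's theorem all but $M_{11},M_{22},M_{23}$ and $McL$ are $(2,3)$-generated, and for those four one checks directly, using known permutation representations or the character-table library, that each is $(2,p)$-generated for an appropriate prime (for instance $p=11$). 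The exceptional groups of Lie type are covered by the L\"ubeck--Malle theorem, which gives $(2,3)$-generation except for the Suzuki groups ${}^{2}B_{2}(2^{2m+1})$; these contain no element of order $3$, so for them I would take $p$ to be a primitive prime divisor of one of $q-1$ or $q\pm\sqrt{2q}+1$, observe via Suzuki's description of the subgroup structure that an element of order $p$ lies in a unique conjugacy class of maximal subgroups (the normalizer of the corresponding cyclic torus), and finish by an easy count of involutions.

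The heart of the matter is the classical groups. Here I would first set aside a finite list of groups of bounded dimension over a bounded field, to be handled by direct computation or by quoting the existing $(2,3)$-generation literature (Pellegrini for $\mathrm{PSL}_n(q)$, and the corresponding papers for the other families), being careful that $\mathrm{PSp}_4(2^a)$ and $\mathrm{PSp}_4(3^a)$ are genuinely \emph{not} $(2,3)$-generated and so must instead be shown $(2,p)$-generated for $p$ a primitive prime divisor of $q^4-1$. For the remaining generic group $G$ of dimension $n$ over $\mathbb{F}_q$, choose $p$ to be a primitive prime divisor of $q^{e}-1$ for a suitably large $e$ determined by the type and dimension; then, by the theorem of Guralnick--Penttila--Praeger--Saxl, every maximal subgroup $M$ of $G$ containing an element of order $p$ lies on a short explicit list (torus normalizers, a few subspace stabilizers, a few classical subgroups, and a bounded collection of almost simple examples). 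Fixing an involution $t$ with centralizer as large as possible, the probability that a uniformly random pair $(t',x')\in t^{G}\times x^{G}$ fails to generate $G$ is at most
\[
\sum_{M}\ \frac{|t^{G}\cap M|}{|t^{G}|}\cdot\frac{|x^{G}\cap M|}{|x^{G}|}\cdot\frac{|G|}{|M|},
\]
the sum running over representatives of the conjugacy classes of maximal subgroups. Using the known fixed-point-ratio bounds for involutions in classical groups (Liebeck--Shalev, Burness, and others) together with the scarcity of maximal subgroups meeting $x^{G}$, this sum is $<1$ once $n$ or $q$ is large, which forces the existence of a generating pair.

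The main obstacle will be the classical groups of small rank --- say dimension at most $10$ or $12$ --- over arbitrary fields. There the primitive-prime-divisor machinery loses its bite: Zsigmondy exceptions must be dealt with by hand, the relevant group has proportionally many maximal subgroups (subfield subgroups, all the geometric classes, and the almost simple $\mathcal{S}$-subgroups), and the fixed point ratios of the natural involution class are not small enough to close the crude union bound above. For families such as $\mathrm{PSL}_{2}(q)$, $\mathrm{PSL}_{3}(q)$, $\mathrm{PSU}_{3}(q)$, $\mathrm{PSp}_{4}(q)$, $\mathrm{PSL}_{4}(q)$, $\mathrm{PSU}_{4}(q)$ and the low-dimensional orthogonal groups I expect to need a genuinely case-by-case analysis: exact conjugacy-class and subgroup data in place of generic estimates, choices of $p$ and of the involution class tuned to each family, and explicit machine computation for the handful of truly small groups that survive everything else. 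Making these ad hoc arguments mesh cleanly with the uniform generic argument, so that no group falls through the cracks, is where the real work lies.
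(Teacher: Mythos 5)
Your plan is essentially the paper's own strategy: reduce via the classification to quoting known $(2,3)$- and $(2,p)$-generation results for alternating, sporadic, exceptional and low-dimensional classical groups, and for the remaining classical groups pick an element $x$ of order a primitive prime divisor of $q^{e}-1$, restrict the maximal overgroups of $x$ via Aschbacher's theorem and Guralnick--Penttila--Praeger--Saxl, and close a union bound on the probability that a random involution together with $x$ lies in a maximal subgroup, with the small-rank and small-field leftovers done case by case. Your sum over class representatives with the factor $|G|/|M|$ is equivalent to the paper's refinement in which the number of conjugates of $M$ containing the fixed $x$ is computed as $|N_G(\langle x\rangle)|/|N_M(\langle x\rangle)|$, so the two arguments coincide in substance.
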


 \par By \cite{Mi1}, for $n \ge 5$ and $n \ne 6, 7, 8$, the alternating groups $A_n$ are $(2,3)$-generated, and by \cite{Mi2} these exceptions are $(2,5)$-generated. By \cite{LuMa} the exceptional groups not equal to ${}^2B_2(2^{2m+1})$ are $(2,3)$-generated, and by \cite{Ev} the Suzuki groups are $(2,5)$-generated. By \cite{Wo} the sporadic groups not equal to $M_{11}, M_{22}, M_{23}, McL$, are $(2,3)$-generated, and by \cite{Wi} these exceptions are $(2,p)$-generated for $p=11, 5, 23, 5$ respectively (in fact, all of these exceptions are $(2,5)$-generated, which can be seen using GAP). By Lemma 2.4 below, the 4-dimensional symplectic groups $PSp_4(2^a) \ (a>1), PSp_4(3^a)$ are $(2,5)$-generated, and when combined with Lemmas 2.1 and 2.2, this shows that all finite simple classical groups with natural module of dimension $n \le 7$ (and $P\Omega_8^+(2)$) are $(2,p)$-generated for some $p \in \{3,5,7 \}$.

\par By Zsigmondy's theorem \cite{Z}, for $ q, e > 1$ with $(q,e) \ne (2^a-1, 2), (2, 6),$ there exists a prime divisor $r=r_{q,e}$ of $q^e-1$ such that $r$ does not divide $q^i -1$ for $i<e$. We call $r$ a primitive prime divisor of $q^e-1$. Notice that, in general, $r_{q,e}$ is not uniquely determined by $(q,e)$. In the group $(\mathbb{F}_r)^\times$, $q$ has order $e$, and so $ r \equiv 1 \mod e$. In view of the above discussion, Theorem 1 follows from the following result.

\begin{bigtheorem}
Let $G$ be a finite simple classical group with natural module of dimension $n$ over $\mathbb{F}_{q^\delta}$, where $\delta=2$ if $G$ is unitary and $\delta=1$ otherwise. Assume $n \ge 8$ and $G \ne P\Omega_8^+(2)$. Let $r$ be a primitive prime divisor of $q^e-1$, where $e$ is listed in Table 1. Then $G$ is $(2,r)$-generated.
\end{bigtheorem}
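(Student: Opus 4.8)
The plan is to produce, for each group $G$ covered by the theorem, an explicit involution $t$ and an explicit element $y$ of order $r$, and then to show that $H := \langle t, y \rangle$ is contained in no maximal subgroup of $G$; since $G$ is simple this forces $H = G$. The organising principle is Aschbacher's subgroup structure theorem: a maximal subgroup $M < G$ either lies in one of the geometric classes $\mathcal{C}_1, \dots, \mathcal{C}_8$ (subspace stabilisers, stabilisers of direct-sum or tensor decompositions, field-extension and subfield structures, alternative classical forms, extraspecial-type normalisers) or is almost simple and acts absolutely irreducibly on the natural module $V$. The proof then has three parts: (a) the choice of $t$ and $y$; (b) excluding $H \le M$ for geometric $M$; (c) excluding $H \le M$ for almost simple $M$.

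For (a), the entries of Table 1 are chosen so that $e$ is close to $n$, and I would take $y$ of order $r = r_{q,e}$ acting on $V$ with a single nontrivial irreducible constituent $W$ of dimension $e$ over the relevant field, on which $y$ generates a Singer-type cyclic subgroup, fixing a complement to $W$. I would record $N_G(\langle y \rangle)$ and note that the $y$-invariant subspaces of $V$ are then very restricted. The involution $t$ I would write down by hand --- typically as a short product of reflections, or a block-diagonal involution mixing coordinates of $W$ with coordinates of the complement --- so that it has two features incompatible with the geometry of $y$: it does not normalise $\langle y \rangle$, so $H$ has no normal cyclic subgroup of order $r$; and it moves the $y$-invariant $e$-space $W$ (and its complement) into general position.

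For (b): if $H \le M \in \mathcal{C}_1$ then $t$ stabilises a subspace whose type, combined with $y$ having a unique constituent of dimension $e$, forces it to be one of the finitely many $y$-invariant subspaces --- contradicting the second feature of $t$. If $M \in \mathcal{C}_2, \mathcal{C}_3, \mathcal{C}_4$ or $\mathcal{C}_7$, then the presence of $y$ imposes a divisibility relation on $n$ and $e$ (a nontrivial divisor of $n$, or $e \mid n/t$, or a divisor $b$ of $e$ equal to the field-extension degree, or a proper factorisation of $n$) which the choice of $e$ either rules out or narrows to the single possibility that $M$ normalises a maximal cyclic torus containing $\langle y \rangle$; that case is killed by the first feature of $t$. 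Subfield subgroups ($\mathcal{C}_5$) do not contain an element of order $r$ because $r$ is primitive for the full field; $\mathcal{C}_8$ is avoided because $t$ and $y$ are built inside the relevant form group; and $\mathcal{C}_6$ is excluded on order grounds for $n \ge 8$. Each step here is a bounded, if laborious, case check over the classical types and the fixed entry of Table 1.

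The hard part is (c), the almost simple class $\mathcal{S}$. Suppose $H \le M$ with $M$ almost simple, socle $S_0$, acting irreducibly of dimension $n$ over $\mathbb{F}_{q^\delta}$ or a subfield. Then $S_0$ contains an element of order $r$, a primitive prime divisor of $q^e - 1$ with $e$ as large as Table 1 dictates. I would apply the Guralnick--Penttila--Praeger--Saxl classification of groups possessing such a ppd element, together with the known lower bounds for dimensions of faithful projective representations of quasisimple groups, to conclude that for $n \ge 8$ the socle $S_0$ lies in a short explicit list (roughly: an alternating group, or another classical group in a near-natural representation). Each case is then removed either by an order comparison --- $H$ contains an element of order $r$ and, from the construction of $t$, a second element of a controlled large order, and $|M|$ is simply too small --- or by a direct verification that $y$ cannot occur in $M$. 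The finitely many genuinely small configurations, including the degenerate cases of Zsigmondy's theorem, would be settled against the explicit lists of maximal subgroups; $P\Omega_8^+(2)$ is excluded from the statement precisely because this final step breaks there. With (a)--(c) in place, $H = \langle t, y \rangle = G$, so $G$ is $(2,r)$-generated.
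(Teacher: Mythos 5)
Your proposal takes a genuinely different route from the paper: the paper never exhibits generators, but fixes one element $x$ of order $r$ and shows probabilistically that $\sum_{x \in M <_{\max} G} i_2(M)/i_2(G) < 1$, using Aschbacher's theorem and \cite{GPPS} to list the maximal $M$ containing $x$, lower bounds for $i_2(G)$, upper bounds for $i_2(M)$, and counts of conjugates of $M$ through $x$ via $|N_G(\langle x\rangle)|/|N_M(\langle x\rangle)|$. A constructive proof of the kind you sketch is not wrong in principle, but as written it has a fatal gap at exactly the point you label the hard part. Several of the candidate maximal subgroups contain both involutions and elements of order $r$: in $\mathscr{C}_2$ the groups of type $O_1(q)\operatorname{wr}S_n$ when $r=n$ or $n-1$; in $\mathscr{C}_6$ the normalisers $2^{2k}.Sp_{2k}(2)$ when $n=2^k$ and $r=n+1$ (take $n=16$, $r=17$, so these are \emph{not} ``excluded on order grounds for $n\ge 8$''); field-extension subgroups in $\mathscr{C}_3$ beyond the torus normaliser; and in $\mathscr{S}$ the groups with socle $PSL_2(r)$, $A_{n+1}$, $A_{n+2}$ and the Weil-representation classical groups. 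For such $M$ no comparison of element orders, nor of $|M|$ against the unknown order of $H=\langle t,y\rangle$, can rule out $H\le M$: you would have to control the group actually generated (for instance the orders of words such as $ty$), and your two stipulated features of $t$ (not normalising $\langle y\rangle$, moving the $y$-invariant subspaces) do not do this. Since $t$ is never actually written down, none of these claims can be verified, and producing a single $t$ that works uniformly in $n$ and $q$ is precisely the difficulty that has confined constructive $(2,r)$-generation results to special cases; the paper's counting argument exists to bypass it.

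Two further specific points. Your $\mathscr{C}_5$ claim is false in general: primitivity of $r$ for $q^e-1$ does not prevent $r$ from dividing the order of a subfield subgroup when the field-extension degree is coprime to $e$. For example $r=17$ is a primitive prime divisor of $8^8-1$, yet $17\mid 2^8-1$, so the maximal subfield subgroup of $PSL_8(8)$ of type $PSL_8(2)$ contains elements of order $17$; an argument covering every primitive prime divisor $r$ must deal with such subgroups rather than dismiss them. Secondly, in $\mathscr{C}_1$ your exclusion only works once you have verified, for the explicit $t$, that it stabilises none of the finitely many $y$-invariant subspaces of each admissible type, and in $\mathscr{S}$ the reduction via \cite{GPPS} and low-dimensional representation bounds gives you the same list the paper works with (Tables 7 and 10), not a shorter one --- so the case-by-case elimination you defer is the bulk of the proof, not a finishing step.
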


\begin{table}[h]
\begin{center}
\def\arraystretch{1.55}
\caption {Values of $e$ in Theorem 2}
\begin{tabular}{ c  c  }
$G$  & $e$ \\ \hline
$PSL_n(q), PSp_n(q), P\Omega^-_n(q)$ &  $n$  \\ \hline 
$P\Omega^+_n(q)$ & $n-2$ \\ \hline
$P\Omega_n(q) \ (nq \ \text{odd})$ & $n-1$ \\ \hline
$PSU_n(q) \ (n \ \text{odd})$ & $2n$ \\ \hline
$PSU_n(q) \ (n \ \text{even})$ & $2n-2$ \\ \hline
\end{tabular}
\end{center}
\end{table}

\par We note that $r$ is well-defined for the groups described in Theorem 2.
\par There is a large literature on other aspects of the generation of finite simple groups, and we note just a few results. In \cite{MSW}, it is shown that every non-abelian finite simple group other than $PSU_3(3)$ is generated by three involutions. In \cite{GuKa}, and proved independently in \cite{Ste}, it is shown that, given a finite simple group $G$, there exists a conjugacy class $C$ of $G$ such that, given an arbitrary non-identity element $x$ in $G$, there exists an element $y$ in $C$ such that $G= \langle x,y \rangle$. 

\par We now sketch our approach to proving Theorem 2. Let $G$ be any finite group. Let $M <_{\operatorname{max}} G$ denote a maximal subgroup. For a group $H$ let $i_m(H)$ denote the number of elements of order $m$ in $H$. Let $P_{2,p}(G)$ denote the probability that $G$ is generated by a random involution and a random element of order $p$, and let $Q_{2,p}(G) = 1-P_{2,p}(G)$. Then we have
\begin{align*}\tag{$1.1$}
Q_{2,p}(G) &\le \sum_{M <_{\text{max}} G} \frac{i_2(M)}{i_2(G)} \frac{i_p(M)}{i_p(G)},  \\
\end{align*}

\vspace{-0.8cm}

\noindent since the right-hand side is an upper bound for the probability that a random involution and a random element of order $p$ lie in some maximal subgroup of $G$. To prove $G$ is $(2,p)$-generated, it suffices to prove $Q_{2,p}(G)<1$. 
\par In fact, for the proof of Theorem 2 we also need a refinement of (1.1). Let $G, r$ be as in Theorem 2, and let $x \in G$ be an element of order $r$. Let $P_2(G,x)$ denote the probability that $G$ is generated by $x$ and a random involution, and let $Q_2(G,x) = 1-P_2(G,x)$. Then by similar reasoning we have
\begin{align*}\tag{$1.2$}
Q_{2}(G,x) &\le \sum_{x \in M <_{\text{max}} G} \frac{i_2(M)}{i_2(G)}. \\
\end{align*}

\vspace{-0.8cm}

\noindent  To prove $G$ is $(2,r)$-generated, it suffices to prove $Q_{2}(G,x)<1$. Our method in most cases is to determine the maximal subgroups $M$ of $G$ containing $x$, and then bound $i_2(M)$ and $i_2(G)$ in terms of $n$ and $q$ such that for $n$ and $q$ sufficiently large we have $Q_2(G,x)<1$. For the remaining cases with small $n$ and $q$ we improve the bounds case by case using literature such as \cite{BHRD}. 

\paragraph{\it Acknowledgements.} This paper is part of work towards a PhD degree under the supervision of Martin Liebeck, and the author would like to thank him for his guidance throughout. The author is also grateful for the financial support from EPSRC.

\vspace{-1cm}

\section*{\center{2. Preliminary results}}
\stepcounter{section}

The $(2,3)$-generation of classical groups has been studied extensively, and there are many results for groups of small dimension that we will make use of. 

\begin{lemma}
If $G$ is a finite simple classical group listed in Table 2, then $G$ is $(2,3)$-generated.
\end{lemma}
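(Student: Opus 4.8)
The plan is to verify the assertion entry-by-entry in Table 2, which lists the relevant low-dimensional simple classical groups together with a reference establishing $(2,3)$-generation in each. The content of the lemma is that Table 2 collects precisely those simple classical groups with natural module of dimension at most $7$ (and $P\Omega_8^+(2)$) that \emph{are} $(2,3)$-generated; the remaining such groups --- those known to fail $(2,3)$-generation --- are deliberately omitted, to be treated in Lemmas 2.2 and 2.4 with a different prime.

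For the linear groups $PSL_n(q)$ with $2 \le n \le 7$ I would invoke Pellegrini \cite{Pe2}, which settles the $(2,3)$-generation problem for $PSL_n(q)$ completely: such a group is $(2,3)$-generated unless $(n,q) \in \{(2,9),(3,4),(4,2)\}$, and these three exceptions are precisely the linear groups absent from Table 2. For the unitary, symplectic and orthogonal groups of small dimension that do appear, I would cite the corresponding results in the literature on low-dimensional classical groups --- those gathered in \cite{SaTa} and the other references recorded for the relevant rows of Table 2 --- together with the generic conclusion of \cite{LiSh}, which already yields $(2,3)$-generation for all but finitely many classical groups other than $PSp_4(2^a)$ and $PSp_4(3^a)$.

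The one family demanding extra care is the $4$-dimensional symplectic case: $PSp_4(2^a)$ with $a>1$ and $PSp_4(3^a)$ with $a \ge 1$ are genuinely not $(2,3)$-generated and so are not listed in Table 2 (they are handled in Lemma 2.4 via $(2,5)$-generation), and $PSp_4(2)' \cong A_6$ likewise fails. For the finitely many smallest groups in each remaining family --- where the asymptotic estimates behind \cite{LiSh} do not yet apply and where the specialized papers sometimes leave a small residue open --- I would confirm $(2,3)$-generation by direct computation in GAP or by exhibiting a generating pair from the ATLAS.

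The main obstacle is therefore organizational rather than mathematical: one must match every row of Table 2 to a correct citation or an explicit verification, and simultaneously confirm that each low-dimensional simple classical group known to fail $(2,3)$-generation --- namely $PSL_2(9) \cong A_6$, $PSL_3(4)$, $PSL_4(2) \cong A_8$, $PSp_4(2^a)$ for $a>1$, and $PSp_4(3^a)$ for $a \ge 1$ --- has been excluded from Table 2 by design, so that the combination of this lemma with Lemmas 2.2 and 2.4 covers every low-dimensional case needed for Theorem 1.
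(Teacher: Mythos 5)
Your proposal coincides with the paper's treatment: Lemma 2.1 is proved there by nothing more than the row-by-row references recorded in Table 2 (\cite{Mac, PeTa, PeTaVs, PePrTa, TabTc, Tab, Pe}), which is exactly your entry-by-entry citation strategy, with your use of \cite{Pe2} for the linear rows being an equally valid (indeed cleaner) substitute. One caution: the parts of your plan resting on \cite{LiSh} (finitely many but unspecified exceptions, so no particular group can be certified from it) and \cite{SaTa} (explicit generation only for large $n$), with a GAP/ATLAS fallback (feasible only for finitely many specific groups, not for families with unbounded $q$), cannot carry any weight, so the dimension-specific references for each row must do all the work, just as in the paper.
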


\begin{table}[h]
\begin{center}
\def\arraystretch{1.55}
\caption {Some $(2,3)$-generated finite simple classical groups}
\begin{tabular}{ c  c  c  }
$G$  & Exceptions & References \\ \hline
$PSL_n(q), 2 \le n \le 7$ & $(n,q) = (2,9), (3, 4), (4,2)$ & \cite{Mac, PeTa, PeTaVs, TabTc, Tab}  \\ \hline 
$PSp_n(q),  4 \le n \le 6$ & $n=4, q = 2^a, 3^a$ & \cite{PeTaVs, Pe}  \\ \hline
$P\Omega_n(q), n =7 $ & &  \cite{Pe}  \\ \hline
$PSU_n(q), 3 \le n \le 7$ & $(n,q) = (3,3), (3,5), (4,2), (4,3), (5,2)$ & \cite{PeTa, PeTaVs, PePrTa, Pe} \\ \hline
\end{tabular}
\end{center}
\end{table}
\noindent We note that though there are many other $(2,3)$-generation results regarding classical groups, our method will not require them.

\begin{lemma}
If $G$ is listed in Table 3 then $G$ is $(2,p)$-generated, where $p \in \{ 5,7 \}$ is specified.
\end{lemma}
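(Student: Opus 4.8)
The groups listed in Table 3 form a finite list of explicit small classical simple groups, so the plan is to establish the claim one group at a time. Fix such a $G$ together with its prescribed prime $p \in \{5,7\}$; it suffices to exhibit an involution $x$ and an element $y$ of order $p$ with $\langle x, y \rangle = G$.

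The cleanest route is a direct structure-constant computation. Choose a class $C_1$ of involutions, a class $C_2$ of elements of order $p$, and a third class $C_3$; the number of pairs $(x,y) \in C_1 \times C_2$ with $xy \in C_3$ is
\[
n(C_1,C_2,C_3) \;=\; \frac{|C_1|\,|C_2|}{|G|} \sum_{\chi \in \operatorname{Irr}(G)} \frac{\chi(g_1)\,\chi(g_2)\,\chi(g_3)}{\chi(1)},
\]
with $g_i \in C_i$, and all the character values needed are available from the ATLAS. A pair $(x,y)$ of this form generates a proper subgroup only if it lies in some maximal subgroup $M$ of $G$, so it is enough to check that $n(C_1,C_2,C_3)$ strictly exceeds $\sum_M n_M(C_1,C_2,C_3)$, where the sum runs over representatives of the conjugacy classes of maximal subgroups and $n_M$ is the corresponding count computed inside $M$. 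The maximal subgroups of every group in Table 3 are completely known (e.g.\ from \cite{BHRD} or the ATLAS), and one chooses $C_3$ to be a class whose element order is divisible by a suitable primitive prime divisor, so that few --- often no --- maximal subgroups meet all three of $C_1, C_2, C_3$; then the inequality is immediate. Equivalently, and arguably more transparently, one simply produces explicit generators (as permutations or matrices) and verifies $\langle x, y \rangle = G$ directly, a computation that is instantaneous in GAP or Magma; either way the bound $Q_{2,p}(G) < 1$ of $(1.1)$ is confirmed and the conclusion follows.

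The only obstacle is bookkeeping: there is no uniform argument, so each row of Table 3 must be treated separately, and in the structure-constant approach some care is needed to pick a third class $C_3$ that genuinely separates $G$ from its maximal subgroups --- in particular being mindful of pairs of algebraically conjugate classes, which the formula above cannot distinguish. None of the individual groups, $P\Omega_8^+(2)$ included, is large enough for this to pose any real difficulty.
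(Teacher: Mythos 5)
Your proposal is sound in outline but follows a genuinely different route from the paper. The paper proves Lemma 2.2 by the same probabilistic device it uses everywhere else: for each group in Table 3 it lists the maximal subgroups $M$ with order divisible by $p$, counts $i_2(M), i_p(M), i_2(G), i_p(G)$ (e.g.\ for $G=PSL_3(4)$, $p=7$, the only relevant $M$ are the $360$ conjugates of $PSL_2(7)$, giving $Q_{2,7}(G)\le \tfrac15$), and concludes from $(1.1)$ that $Q_{2,p}(G)<1$; no character-theoretic structure constants and no explicit generators are needed, only element counts. Your structure-constant method (\`a la Woldar) is a standard and perfectly workable alternative for groups of this size, and your fallback of exhibiting explicit generators and verifying $\langle x,y\rangle=G$ in GAP or Magma is also legitimate --- indeed the paper itself appeals to GAP for the parallel assertion that these groups are $(2,5)$-generated. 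What the paper's method buys is lighter input (only $i_2$ and $i_p$ in $G$ and in the relevant maximal subgroups) and uniformity with the rest of the argument; what yours buys is an exact count of generating pairs with prescribed product class, at the cost of heavier bookkeeping. On that bookkeeping: your displayed formula is the count of pairs with $xy$ equal to a \emph{fixed} element $g_3$ (and should carry $\overline{\chi(g_3)}$), not the count with $xy\in C_3$, and the comparison with maximal subgroups must be weighted by the number of conjugates of each $M$ containing the fixed $g_3$ and must track the fusion of $M$-classes into $C_1,C_2,C_3$; these are exactly the details hidden in your phrase ``the corresponding count computed inside $M$'', and they need to be carried out for each row of Table 3 before the inequality can be called immediate. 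Finally, note that establishing generation directly does not so much ``confirm the bound $Q_{2,p}(G)<1$ of $(1.1)$'' as bypass it; the implication runs from generation to $Q_{2,p}(G)<1$, not the other way.
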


\begin{table}[h]
\begin{center}
\def\arraystretch{1.3}
\caption {Cases when $G$ is small and $(2,p)$-generated}
\begin{tabular}{ c  c  c}
$G$ & $(n,q)$ & $p$\\ \hline
$PSL_n(q)$ & $(2,9), (4,2)$ & $5$\\ 
& $(3, 4)$ & $7$ \\ \hline
$PSU_n(q)$ &  $(4,2), (5,2)$ & $5$ \\ 
& $  (3,3), (3,5), (4,3)$ & $7$ \\ \hline
$P\Omega^+_8(2)$ & & $5$ \\ \hline

\end{tabular}
\end{center}
\end{table}
\vspace{-0.3cm}

\begin{proof}
\par Let $G=PSL_3(4)$ and $p=7$. By \cite{Atlas} the only maximal subgroups $M$ of $G$ with order divisible by 7 are isomorphic to $PSL_2(7)$. The index of $M=PSL_2(7)$ in $G$ is 120 and there are 3 $G$-conjugacy classes of such subgroups. We find $i_2(PSL_2(7))=21, i_2(G)=315, i_7(PSL_2(7))=48, i_7(G)=5760$. Therefore by $( 1.1 )$,
\begin{align*}
Q_{2,7}(G) &\le \sum_{M <_{\operatorname{max}}G} \frac{i_2(M)}{i_2(G)} \frac{i_7(M)}{i_7(G)} \\
&= 360 \frac{i_2(PSL_2(7))}{i_2(G)} \frac{i_7(PSL_2(7))}{i_7(G)} \\
&= \frac{1}{5},
\end{align*}
\noindent and so $G$ is $(2,7)$-generated. The results for the remaining groups are proved similarly. \end{proof}

\begin{remark} We note that the groups in Table 3 are not $(2,3)$-generated: it is elementary to prove that $PSL_2(9)$ is not $(2,3)$-generated; for $PSL_3(4), PSU_3(3), PSU_3(5),$ this is proved in \cite{PeTa}; for $PSL_4(2), PSU_4(3)$, this is proved in \cite{PeTaVs}; for $PSU_4(2) \cong PSp_4(3),$ this is proved in \cite{LiSh}; for $PSU_5(2)$, this is proved in \cite{Vs}; and for $P\Omega_8^+(2)$, this is proved by Vsemirnov. We also note that $PSL_3(4), PSU_3(5)$ and $PSU_4(3)$ are actually $(2,5)$-generated; this can be seen using GAP. However, the method discussed using (1.1) fails in these cases, and so we do not prove this statement.
\end{remark}

\par As discussed in the preamble to Theorem 2, in order to reduce our study of $(2,p)$-generation of finite simple groups to the groups covered by Theorem 2 we need to prove that the 4-dimensional symplectic groups $PSp_4(2^a) \ (a>1), PSp_4(3^a)$ are $(2,p)$-generated for some prime $p$. We first state \cite[Proposition~1.3]{LaLiSe}, which will be useful for later results.

\begin{proposition} 
Let $Y$ be a simple algebraic group over $K$, an algebraically closed field of characteristic $p>0$, and let $N$ be the number of positive roots in the root system of $Y$. Suppose that $F$ is a Frobenius morphism of $Y$ such that $G=(Y^F)'$ is a finite simple group of Lie type over $\mathbb{F}_q$. Assume $G$ is not of type ${}^2F_4, {}^2G_2$ or ${}^2B_2$, and define $N_2=\operatorname{dim}Y-N$. Then
\begin{equation*}i_2(\operatorname{Aut}G) < 2(q^{N_2}+q^{N_2-1}).
\end{equation*}
\end{proposition}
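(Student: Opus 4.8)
The plan is to estimate $i_2(\operatorname{Aut} G)$ by counting involutions one coset at a time relative to the subgroup of inner-diagonal automorphisms. Let $\bar Y$ be the adjoint group isogenous to $Y$, keep the notation $F$ for the induced Frobenius morphism, and put $\bar G := \bar Y^F = \operatorname{Inndiag}(G)$; recall that $\dim Y = 2N + \ell$, where $\ell$ is the rank, so that $N_2 = N + \ell$. By Steinberg's description of $\operatorname{Aut}(G)$ one has $\operatorname{Aut}(G) = \bar G.(\langle\phi\rangle\times\langle\gamma\rangle)$, where $\phi$ is a field automorphism of order $f$ (with $q = p^f$) and $\gamma$ a graph automorphism. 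Every involution of $\operatorname{Aut}(G)$ lies in a coset $\bar G c$ with $c^2\in\bar G$; since all involutory field automorphisms of $\bar G$ are $\bar G$-conjugate, and likewise for graph and graph–field automorphisms, there are at most four relevant cosets ($\bar G$ itself and at most three outer ones), and in each we may take $c\in\{1,\phi_0,\gamma_0,\phi_0\gamma_0\}$ for suitable standard involutions $\phi_0,\gamma_0,\phi_0\gamma_0$. (For the excluded groups ${}^2B_2,{}^2G_2,{}^2F_4$ the relationship between $|Y^F|$, $N$ and $q$ is of a different shape; the exceptional graph automorphisms of $B_2,G_2,F_4$ in characteristics $2,3,2$ produce involutions only through those groups or else reduce to the standard cases, and these bounded-rank groups may in any event be checked directly.) Thus $i_2(\operatorname{Aut} G) = \sum_c \#\{\iota\in\bar G c:\iota^2=1\}$.

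Fix a coset $\bar G c$ with $c^2 = 1$. An element $xc$ with $x\in\bar G$ is an involution exactly when $x\,c(x) = 1$, i.e. $x$ is inverted by $c$, and the set of such $x$ is a union of $c$-twisted conjugacy classes of $\bar G$, the class of $x$ having size $|\bar G:C_{\bar G}(xc)|$. Hence $\#\{\iota\in\bar G c:\iota^2 = 1\}$ is at most (the number of such twisted classes) $\times\ \max_\iota |\bar G:C_{\bar G}(\iota)|$, where the maximum runs over involutions $\iota\in\bar G c$. Now $|C_{\bar G}(\iota)| = |C_{\bar Y}(\iota)^F|$, which — up to bounded component-group factors — is a constant multiple of $q^{\dim C_{\bar Y}(\iota)}$, while $|\bar G|$ is a constant multiple of $q^{\dim Y}$; so the estimate reduces to bounding $\dim C_{\bar Y}(\iota)$ from below, together with counting involution classes in each coset.

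The geometric heart of the argument is the claim that for every involution $\iota$ of $\bar Y$, or of the extension of $\bar Y$ by an involutory graph or graph–field automorphism, one has $\dim C_{\bar Y}(\iota)\ge N$, equivalently $\dim\iota^{\bar Y}\le\dim Y - N = N_2$; that across all of the relevant cosets there are at most two involution classes for which equality holds; and that every other involution class has strictly smaller dimension. For an involutory field automorphism the centraliser in $\bar Y$ is a group of the same type over $\mathbb{F}_{\sqrt q}$, of dimension $\tfrac{1}{2}\dim Y = N + \ell/2$, giving a class of dimension $N_2 - \ell/2$. These facts are established by running through the classification of involutions: when $p$ is odd an inner-diagonal involution is semisimple and $C_{\bar Y}(\iota)$ is reductive of maximal rank, so $\dim C_{\bar Y}(\iota) = \ell + 2m$ with $m$ the number of positive roots fixed by $\iota$, and the inequality $\ell + 2m\ge N$ is verified type by type (the ``split'' inner or graph involution giving equality, depending on the type); when $p = 2$ an involution is unipotent and the bound on its class dimension follows from the explicit list of the relevant nilpotent orbits; the graph and graph–field cases use the known descriptions of the fixed subgroups (symmetric subgroups such as $Sp_\ell$, $SO_\ell$, $F_4$, $C_4$, and twisted forms such as unitary groups).

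It remains to assemble the bound. Writing each coset contribution as a sum of twisted class sizes and using $|\iota^{\bar Y^F}|\le c_0\, q^{\dim\iota^{\bar Y}}$ for an explicit constant $c_0$, the at-most-two classes of dimension $N_2$ contribute at most roughly $2q^{N_2}$; every remaining involution class has strictly smaller dimension, with dimensions decreasing from the top in steps of size at least $2, 4, 6,\ldots$, so their sizes form a geometrically convergent sum of total size $O(q^{N_2-2})$; and the field coset contributes $O(q^{N_2-\ell/2})$. Bounding the polynomial corrections $|\bar Y^F|/q^{\dim Y}$ and $|C_{\bar Y}(\iota)^F|/q^{\dim C_{\bar Y}(\iota)}$ precisely, family by family, and checking a short list of small-rank and small-$q$ cases by hand, then gives $i_2(\operatorname{Aut} G) < 2(q^{N_2} + q^{N_2-1})$. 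The main obstacle is exactly this last step: the leading constant $2$ is attained, up to lower-order terms, for some adjoint classical groups, so nothing can be wasted, and one is forced to carry the polynomial ratios through each Lie type (plus a handful of exceptional small cases) rather than argue uniformly.
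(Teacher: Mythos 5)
The paper does not prove this statement at all: it is quoted verbatim as \cite[Proposition~1.3]{LaLiSe}, so there is no internal proof to compare against, and what you have written should be judged as a free-standing argument. Judged that way, your outline follows the natural strategy (and broadly the spirit of the cited reference): split $\operatorname{Aut}G$ into cosets of $\operatorname{Inndiag}(G)$, count involutions in each coset by twisted classes, and bound class sizes by showing $\dim C_{\bar Y}(\iota)\ge N$. But as it stands it is a plan, not a proof. Every quantitatively decisive step is deferred: the inequality $\dim C_{\bar Y}(\iota)\ge N$ for all relevant involutions is ``verified type by type'' without being verified (and in characteristic $2$ the unipotent involution classes $a_k,b_k,c_k$ and the exceptional graph automorphisms of $B_2, F_4, G_2$ need genuine care); the assertion that at most two classes attain dimension $N_2$, and that the remaining classes drop ``in steps of size at least $2,4,6,\dots$'', is stated without justification and is exactly the kind of claim that must be checked against the classification of involutions in each type; and the final passage from $|\iota^{\bar G}|\le c_0 q^{\dim \iota^{\bar Y}}$ to the sharp bound $2(q^{N_2}+q^{N_2-1})$ is waved through as ``bounding the polynomial corrections precisely, family by family'', which, as you yourself note, is the whole difficulty since the constant $2$ is essentially attained. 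A bound of the form $Cq^{N_2}$ for some unspecified constant would follow from your sketch; the stated inequality does not.

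There is also a technical misstep in the field-automorphism coset: a field automorphism is not an algebraic automorphism of $\bar Y$, so ``$C_{\bar Y}(\iota)$ is a group of the same type over $\mathbb{F}_{\sqrt q}$'' is not meaningful as written. The correct device is to realise the coset representative as a Frobenius morphism $F_0$ with $F_0^2=F$ and to count solutions of $x\,{}^{F_0}x=1$ in $\bar Y^F$ via Lang--Steinberg, giving roughly $|\bar Y^F|/|\bar Y^{F_0}|\approx q^{\dim Y/2}=q^{N_2-\ell/2}$ involutions in that coset; your dimension heuristic lands on the right exponent, but the argument needs to be set up this way to be valid. In summary: right architecture, but the load-bearing case analysis and the sharp constant are missing, so the proposal has a genuine gap rather than being a complete alternative proof of the quoted result.
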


\vspace{0.1cm}
\begin{lemma}
Let $G=PSp_4(q)$ where $q=2^a$ or $3^a$, $q \ne 2$. Then $G$ is $(2,5)$-generated.
\end{lemma}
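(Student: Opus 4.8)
The plan is to apply the probabilistic inequality (1.1) with $p=5$, so it suffices to show $Q_{2,5}(G) < 1$ for $G = PSp_4(q)$ with $q = 2^a$ ($a \ge 2$) or $q = 3^a$. Since we are in the generic symplectic case, a primitive prime divisor $r$ of $q^4 - 1$ exists (the exceptions to Zsigmondy fail only for small $q$ already covered), and an element of order $5$ does not obviously play the role of such an $r$ — so I do not use (1.2) here but work directly. First I would record the standard numerology: $i_2(G)$ can be bounded below using Proposition 2.3 (or rather, a matching lower bound of the same shape, $i_2(G) \gg q^{N_2}$ with $N = 4$, $\dim Y = 10$, so $N_2 = 6$), giving $i_2(G) \asymp q^6$; more precisely one gets an explicit lower bound like $i_2(G) > q^6/2$ or similar. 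Likewise $i_5(G) \gg q^{?}$: elements of order $5$ lie in maximal tori, and counting them gives $i_5(G)$ of order roughly $q^4$ or $q^6$ depending on which torus contains them — the key point is a clean polynomial lower bound.

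Next I would enumerate the maximal subgroups $M$ of $G = PSp_4(q)$ whose order is divisible by $5$, using Aschbacher's theorem / the explicit lists (e.g. \cite{BHRD} for $q$ in the relevant range, or the generic description of maximal subgroups of $PSp_4(q)$). The candidates are: parabolic subgroups (stabilizers of isotropic $1$- or $2$-spaces), the reducible-but-imprimitive subgroups $Sp_2(q) \wr S_2$ and $Sp_2(q^2).2$, the field-extension and subfield subgroups, $O_4^\pm(q)$-type subgroups, and finitely many "$\mathcal{S}$-type" almost simple subgroups ($PSL_2$'s, $A_5$, $A_6$, $S_6$, etc.) which occur only for small $q$. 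For each family $M$ I would bound $i_2(M) \le |M|$ (or better, $i_2(M) \ll |M|^{\text{something}}$, but $|M|$ suffices) and $i_5(M)$ trivially by $i_5(M) \le |M|$, then compare $\frac{i_2(M)}{i_2(G)}\frac{i_5(M)}{i_5(G)}$ against $q$. The dominant contribution is from the parabolics and the imprimitive subgroups, which have order $\asymp q^4 \cdot |{\rm small}|$; since $i_2(G)i_5(G) \gg q^{10}$, each such term is $O(q^{-2})$ or smaller, and there are only $O(1)$ conjugacy classes of each type, so the sum is $< 1$ once $q$ is larger than some explicit bound.

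The main obstacle will be the small values of $q$ — specifically $q = 4, 8, 16$ and $q = 3, 9, 27$ (and one must remember $q=2$ is excluded and $q=3$ gives $PSp_4(3) \cong PSU_4(2)$, already handled in Lemma 2.2 / Table 3). For these the crude bound $i_2(M) \le |M|$, $i_5(M) \le |M|$ may not give $Q_{2,5}(G) < 1$, and I would instead pull exact values of $i_2$ and $i_5$ for $G$ and for each relevant maximal $M$ from \cite{Atlas} or GAP, exactly as in the proof of Lemma 2.2 for $PSL_3(4)$. So the proof splits as: (i) a uniform argument via (1.1) for $q$ above an explicit threshold, using the maximal-subgroup classification and polynomial bounds on involution and order-$5$ element counts; (ii) a finite check for the handful of remaining small $q$, reducing $PSp_4(3)$ to the known case and computing directly for $PSp_4(4), PSp_4(8), PSp_4(9), PSp_4(16)$, etc. The delicate point in (i) is getting the lower bound on $i_5(G)$ right — one must be careful that an element of order $5$ genuinely exists and lies in enough tori — but since $5 \mid q^4-1$ whenever $q \equiv \pm 2 \pmod 5$ and $5 \mid q^2-1$ or $q^2+1$ otherwise, $5$ always divides $|Sp_4(q)| = q^4(q^2-1)(q^4-1)$ for $q$ coprime to $5$, so order-$5$ elements are plentiful and the torus count goes through.
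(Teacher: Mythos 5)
Your overall strategy (apply (1.1) with $p=5$, list the maximal subgroups of order divisible by $5$ via \cite{BHRD}, then do a finite check for small $q$ via \cite{Atlas}/GAP) is the same as the paper's, but the quantitative core of your "large $q$" step does not work, and that is where the real content of the proof lies. The trivial bounds $i_2(M)\le |M|$, $i_5(M)\le |M|$ are nowhere near sufficient: the sum in (1.1) runs over \emph{all} maximal subgroups, so each conjugacy class contributes $|G:M|\cdot\frac{i_2(M)}{i_2(G)}\frac{i_5(M)}{i_5(G)}$, and with the trivial bounds this is about $\frac{|G|\,|M|}{i_2(G)\,i_5(G)}$. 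Here $|G|\approx q^{10}$, $i_2(G)\approx q^{6}$ and $i_5(G)\approx q^{8}$ (regular semisimple classes have toral centralizers of order $\approx q^2$, so $i_5(G)$ is of order $q^8$, not $q^4$--$q^6$ as you estimate), while the dominant subgroups are far bigger than "$q^4\cdot$ small": the parabolic of type $[q^3]{:}GL_2(q)$ has order $\approx q^7$ and the subgroups $Sp_2(q)\wr S_2$, $Sp_2(q^2).2$, $SO_4^{\pm}(q)$ have order $\approx q^6$. So the parabolic class alone contributes roughly $q^{10}\cdot q^{7}/q^{14}=q^{3}$, which grows with $q$; your claim that each term is $O(q^{-2})$ comes from forgetting the factor $|G:M|$ (summing only over class representatives is not a valid upper bound for $Q_{2,5}$) and from underestimating the parabolic's order. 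The paper avoids this by computing genuinely sharp counts $i_2(M)$ and $i_5(M)$ for every type of maximal subgroup (from Enomoto's class data for $Sp_4(2^a)$, Shahabi--Mohtadifar for $q=3^a$, Proposition 2.3, and Suzuki's work for $Sz(q)$): e.g.\ $i_2([q^3]{:}GL_2(q))\approx q^4$ and $i_5([q^3]{:}GL_2(q))\approx q^4$, both far below $|M|\approx q^7$. Without replacements of this kind your uniform argument never closes, and the finite check in part (ii) covers only finitely many $q$, so the lemma (which concerns all $q=2^a,3^a$) is not proved.

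A secondary but genuine slip: your assertion that the class $\mathscr{S}$ contributes only for small $q$ is false for $G=PSp_4(2^a)$ with $a$ odd, where $Sz(q)$ is maximal and $5\mid q^2+1$ for every such $q$; these subgroups must be carried through the asymptotic estimate (the paper does so, using $i_2(Sz(q))=(q-1)(q^2+1)$ and $i_5(Sz(q))\le q^2(q+\sqrt{2q}+1)(q-1)$). Likewise, for $q$ even both maximal parabolics have shape $[q^3]{:}GL_2(q)$ and give two classes of index $(q+1)(q^2+1)$, and the subfield subgroups $Sp_4(q^{1/t})$ contribute up to $\log_2 q$ classes; these multiplicities need to be tracked explicitly, as in Table-style bookkeeping, before the final inequality $Q_{2,5}(G)<1$ can be verified for $q\ge 8$ (resp.\ the analogous bound for $q=3^a$), with the remaining small values $q=3,4$ handled by \cite{Atlas} or the isomorphism $PSp_4(3)\cong PSU_4(2)$ as you suggest.
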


\begin{remark} We note that by \cite[Theorem~1.2]{PeTaVs}, $PSp_4(3^{2a})$ is $(2,5)$-generated. However, we do not use this result; we prove Lemma 2.4 by our usual method using $(1.1)$.
\end{remark}

\begin{proof}
\par Suppose $q=2^a$. Details on the conjugacy classes of $G$ can be found in \cite{E}. We find $i_2(G)=(q^2+1)(q^4-1), i_5(G) \ge q^3(q-1)(q^2+1)(q^2-q+4)$. By \cite{BHRD} the possible maximal subgroups with order divisible by 5 are isomorphic to $[q^3]:GL_2(q), Sp_2(q) \wr S_2, Sp_2(q^2).2, SO_4^\pm(q), Sz(q)$ or $Sp_4(q^\frac{1}{t})$ for some prime $t$. 
\par If $M \cong [q^3]:GL_2(q)$, then by \cite{E} we compute $i_2(M)=(q-1)(q^3+2q^2+q+1), i_5(M) \le 2q^3(q+1)(2q+5)$. By \cite{BHRD} there are two $G$-conjugacy classes of subgroups of this type, and $|G:M|=(q+1)(q^2+1)$.
\par If $M \cong Sp_2(q) \wr S_2$ then $i_2(M)=(i_2(Sp_2(q))+1)^2-1+|Sp_2(q)| = q^4+q^3-q-1, \\ i_5(M) = (i_5(Sp_2(q))+1)^2-1 \le 4q(q^3+2q^2+2q+1)$. There is a single $G$-conjugacy class of subgroups of this type, and $|G:M|=\frac{q^2(q^2+1)}{2}$.
\par If $M \cong Sp_2(q^2).2$, then by Proposition 2.3 we have $i_2(M) < 2q^2(q^2+1)$, and we also have $i_5(M) =i_5(Sp_2(q^2)) \le 2q^2(q^2+1)$. There is a single $G$-conjugacy class of subgroups of this type, and $|G:M|=\frac{q^2(q^2-1)}{2}$.
\par Suppose $M \cong Sp_4(q^\frac{1}{t})$ for some prime $t$. Then by \cite{E} we have $i_2(M) = (q^\frac{2}{t}+1)(q^\frac{4}{t}-1), i_5(M) \le q^\frac{3}{t}(q^\frac{1}{t}+1)(q^\frac{2}{t}+1)(q^\frac{2}{t}+q^\frac{1}{t}+4)$. There are fewer than $\log_2(q)$ $G$-conjugacy classes of such subgroups, each with $|G:M| = \frac{q^4(q^2-1)(q^4-1)}{q^\frac{4}{t}(q^\frac{2}{t}-1)(q^\frac{4}{t}-1)}$. 
\par If $M \cong SO^+_4(q) \cong SL_2(q) \wr S_2$ then $i_2(M) = (i_2(SL_2(q))+1)^2-1+|SL_2(q)| = q^4+q^3-q-1, i_5(M)=(i_5(SL_2(q))+1)^2-1 \le 4q(q^3+2q^2+2q+1)$ as above. There is a single $G$-conjugacy class of subgroups of this type, and $|G:M|=\frac{q^2(q^2+1)}{2}$.
\par If $M \cong SO^-_4(q) \cong SL_2(q^2).2$, then by Proposition 2.3 we have $i_2(M) < 2q^2(q^2+1)$ and $i_5(M) = i_5(SL_2(q^2)) \le 2q^2(q^2+1)$ as above. There is a single $G$-conjugacy class of subgroups of this type, and $|G:M|=\frac{q^2(q^2-1)}{2}$. 
\par Finally suppose $M=Sz(q)$. Then by \cite{Suz} we have $i_2(M)=(q-1)(q^2+1), i_5(M) \le q^2(q+\sqrt{2q}+1)(q-1)$. There is a single $G$-conjugacy class of subgroups of this type, and $|G:M|=q^2(q+1)(q^2-1)$.
\par Therefore by $(1.1)$, 
\begin{align*}
Q_{2,5}(G) &\le \sum_{M <_{\operatorname{max}}G} \frac{i_2(M)}{i_2(G)} \frac{i_5(M)}{i_5(G)} \\[2pt]
&=  \Bigg( \sum_{M \cong [q^3]:GL_2(q)} + \sum_{M \cong Sp_2(q) \wr S_2}+\sum_{M \cong Sp_2(q^2).2} +\sum_{M \cong Sp_4(q^\frac{1}{t})} \\[0pt]
& \hspace{6cm} + \sum_{M \cong SO_4^+(q)} +\sum_{M \cong SO_4^-(q)} +\sum_{M \cong Sz(q)} \Bigg)   \frac{i_2(M)}{i_2(G)} \frac{i_5(M)}{i_5(G)} \\[1pt]
& \le \Big(4q^3(q-1)(q+1)^2(2q+5)(q^2+1)(q^3+2q^2+q+1) \\[-2pt]
&\hspace{2cm} + 2q^3(q^2+1)(q^3+2q^2+2q+1)(q^4+q^3-q-1) \\[3pt]
&\hspace{3.5cm} + 2q^6(q^2-1)(q^2+1)^2 \\[3pt]
&\hspace{4.8cm} + 2\log_2(q)q^\frac{7}{2}(q^\frac{1}{2}+1)(q+1)(q+q^\frac{1}{2}+4)(q^2-1)(q^4-1) \\[3pt]
&\hspace{3.5cm} + 2q^3(q^2+1)(q^3+2q^2+2q+1)(q^4+q^3-q-1) \\[3pt]
&\hspace{2cm} + 2q^6(q^2-1)(q^2+1)^2 \\[-2pt]
&\hspace{-1.25cm} + q^4(q-1)^2(q+1)(q+\sqrt{2q}+1)(q^2-1)(q^2+1) \Big) \frac{1}{q^3(q-1)(q^2+1)^2(q^2-q+4)(q^4-1)}, 
\end{align*}
\noindent and it is straightforward to verify $Q_{2,5}<1$ for $q \ge 8$. If $q=4$ then a similar argument using \cite{Atlas} yields the result. This completes the proof for $q=2^a$.
\par The argument for $q=3^a$ is similar (using \cite{ShMo} instead of \cite{E} for details on the conjugacy classes of $G$). 
\end{proof}

\vspace{-1.7cm}

\section*{\center{3. Involutions in classical groups}}
\stepcounter{section}
\par Let $G$ be a finite simple classical group with natural module of dimension $n$ defined over the field $\mathbb{F}_{q^\delta}$, where $\delta=2$ if $G$ is unitary and $\delta=1$ otherwise. In this section we find a lower bound for $i_2(G)$, the number of involutions in $G$.

\begin{proposition}
The number $i_2(G)$ of involutions satisfies $i_2(G) \ge I_2(G)$, for $I_2(G)$ is given in Table 4.
\end{proposition}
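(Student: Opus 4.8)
The plan is to produce the lower bound $I_2(G)$ by exhibiting, for each family of classical groups, a single large conjugacy class (or union of a few classes) of involutions and counting it explicitly via the orbit–stabilizer formula $i_2(G) \ge |G \colon C_G(t)|$ for a well-chosen involution $t$. Rather than enumerate all involution classes, I would pick the class whose centralizer is smallest, since this gives the dominant contribution and already suffices for a clean lower bound of the shape stated in Table 4. Concretely, in $GL_n(q)$ (and its relatives) an involution in odd characteristic is determined up to conjugacy by the dimensions of its $\pm 1$ eigenspaces, with centralizer roughly $GL_k(q) \times GL_{n-k}(q)$; this is minimized near $k \approx n/2$, giving $i_2 \gtrsim q^{n^2/2}$ up to lower-order factors. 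In even characteristic one instead uses unipotent involutions $t$ with $(t-1)$ of maximal rank $\lfloor n/2 \rfloor$, whose centralizer has order roughly $q^{\lfloor 3n^2/4 \rfloor}$-ish; again one selects the class with the smallest centralizer. For unitary, symplectic and orthogonal groups the same strategy applies, working inside the relevant isometry group and then passing to the simple group, paying attention to the bounded-index issues coming from the center and from $\Omega$ versus $SO$.

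The key steps, in order, would be: (i) fix notation for the isometry group $\bar G$, the derived/simple group $G$, and record $|Z(\bar G)|$ and $|\bar G \colon G|$ so that $i_2(G) \ge i_2(\bar G)/|\bar G \colon G| - (\text{error from central involutions})$ — but more cleanly, just count involutions lying in $G$ directly by choosing classes that survive in the simple quotient; (ii) split into odd and even characteristic; (iii) in each case choose the explicit involution $t$ (balanced $\pm1$-eigenspaces in the odd case; maximal-rank unipotent of the appropriate type in the even case), compute or cite (e.g. from the Aschbacher–Seitz / Liebeck–Seitz descriptions, or from the references already used such as \cite{E}, \cite{ShMo}) the structure and order of $C_G(t)$; (iv) form $|G \colon C_G(t)|$, and simplify/weaken it to the closed-form expression $I_2(G)$ recorded in Table 4, checking the inequality direction is preserved; (v) handle the orthogonal cases $P\Omega_n^\pm(q)$ with their type ($+$ versus $-$) and the odd-dimensional $P\Omega_n(q)$ separately, since the eigenspace-dimension parities interact with the discriminant/spinor-norm conditions that define $\Omega$.

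The main obstacle I expect is in even characteristic for the symplectic and orthogonal groups: there the involution classes are unipotent, their parametrization (Aschbacher–Seitz type $a_k$, $b_k$, $c_k$ involutions) is more delicate, the centralizer orders are messier (products of a classical group on the radical with a symplectic/orthogonal group on a quotient, times a $q$-power from the unipotent radical), and one must be careful that the chosen class actually meets the simple group $G$ rather than only $\bar G$. A secondary nuisance is bookkeeping the small-index discrepancies ($|Z|$, $|SO:\Omega| = 2$, diagonal automorphisms) so that the bound is genuinely valid for $G$ itself and not merely for the ambient isometry or similarity group; I would absorb these into the constant by using a slightly lossy but uniform estimate. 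Everything else — the odd-characteristic cases and the linear/unitary cases — should reduce to routine manipulation of the formula $|G \colon C_G(t)|$ once the right class is fixed.
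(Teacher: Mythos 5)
Your proposal takes essentially the same route as the paper: for each family one fixes a single well-chosen involution $y$ (balanced $\pm1$-eigenspaces when $q$ is odd, a maximal-rank Aschbacher--Seitz type involution when $q$ is even), reads off the centralizer structure from Table 5.5.1 of \cite{GoLySo} and from \cite{AsSe}, \cite{LiSe}, and bounds $i_2(G)\ge |y^G|=|G:C_G(y)|$, then weakens to the closed form of Table 4, with the $\Omega$-versus-$SO$ and centre bookkeeping handled exactly as you describe. (Your parenthetical estimate of the even-characteristic centralizer as roughly $q^{3n^2/4}$ is too large --- e.g.\ $C_{GL_n(q)}(j_{n/2})\cong [q^{n^2/4}].GL_{n/2}(q)$ has order about $q^{n^2/2}$ --- but your step of computing the class size exactly from the cited centralizer structures would correct this.)
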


\begin{proof}
\par  For each $G$ we specify an involution $y$ and calculate $|y^G|$ to give a lower bound for $i_2(G)$. The choices for $y$ and values $|y^G|$ are listed in Table 5. It is then elementary to obtain the bounds stated in Table 4. We let $[d]$ denote an arbitrary group of order $d$. The notation of the elements in the table is as follows:
\par  If $q$ is odd define the following (projective) involutions in $GL_n(q)$:
\begin{align*}
s &= 
\begin{pmatrix}
i I_{\frac{n}{2}} \\
& -i I_{\frac{n}{2}}
\end{pmatrix} \hspace{0.15cm} \text{if $n$ is even, $q \equiv 1 \ \text{mod} \ 4$ and $i^2=-1$,} \\
t &=
\begin{pmatrix}
& I_\frac{n}{2} \\
-I_{\frac{n}{2}}
\end{pmatrix} \hspace{0.48cm} \text{if $n$ is even,} \\
u_k &=  \begin{pmatrix}
 I_k \\
& -I_{n-k}
\end{pmatrix}, \hspace{0.1cm} \text{$0 <k<n$;}
\end{align*}
 
\par If $q$ is even, define the following involutions in $GL_n(q)$:
\begin{align*}
j_k = \begin{pmatrix} I_k \\ & I_{n-2k} \\ I_k & & I_k \end{pmatrix},  \hspace{0.1cm}  0<k \le \frac{n}{2}.
\end{align*}

\par In the case where $q$ is odd, conjugacy class representatives of involutions are found in Table 5.5.1 of \cite{GoLySo}, listed alongside information on their respective centralizers. 
\par In the case where $q$ is even and $G=PSL^\epsilon_n(q)$, involutions are conjugate to $j_k$ for some $k$. The structures of the centralizers of $j_k$ are found in Sections 4 and 6 of \cite{AsSe}, and exact conjugacy class sizes can be calculated using \cite[Theorem~7.1]{LiSe}. If $q$ is even and $G$ is symplectic or orthogonal, involutions are of the form $a_k, b_k$ or $c_k$ with Jordan normal form $j_k$ by \cite{AsSe}. In Sections 7 and 8 of the same paper the structure of the centralizer of each involution is given, and exact conjugacy class sizes can be calculated using \cite[Theorem~7.2]{LiSe}.\end{proof}

\begin{table}[h]
\begin{center}
\def\arraystretch{1.7}
\caption {Lower bounds for $i_2(G)$}
\begin{tabular}{ c  c  }
$G$  & $I_2(G) $  \\ \hline
$PSL^\epsilon_n(q)$ &  $\frac{1}{8}q^{\lfloor \frac{n^2}{2} \rfloor}$ \\
$PSp_n(q)$ & $\frac{1}{2}q^{\frac{n^2}{4}+\frac{n}{2}}$\\
$P\Omega^\epsilon_n(q)$ &  $\frac{1}{8}q^{\frac{n^2}{4}-1}$ \\
$P\Omega_n(q), nq$ odd & $ \frac{1}{2}q^{\frac{n^2-1}{4}}$ \\  \hline
\end{tabular}
\end{center}
\end{table}

\vspace{0.0cm}

\renewcommand{\arraystretch}{1.55}

\setlength\LTleft{-1.5cm}
\footnotesize
\begin{longtable}{c  c  c  c}
\caption{Values for $|y^G|$} \\
$G$  & Conditions & $y$ & $|y^G|$  \\ \hline
$PSL^\epsilon_n(q), n \ge 2$ & $n$ even, $q$ even & $j_\frac{n}{2} $ & $|GL^\epsilon_n(q):[q^{\frac{n^2}{4}}].GL^\epsilon_\frac{n}{2}(q)|$ \\
& $n$ even, $ q \equiv \epsilon \mod 4$ & $s$ &  $|GL^\epsilon_n(q):GL^\epsilon_\frac{n}{2}(q)^2.2|$ \\
& $n$ even, $q \equiv -\epsilon \mod 4$ & $t$ & $|GL^\epsilon_n(q):GL_{\frac{n}{2}}(q^2).2|$ \\
& $n$ odd, $q$ even & $j_\frac{n-1}{2}$ & $|GL^\epsilon_n(q):[q^{\frac{n^2+2n-7}{4}}].\left(GL^\epsilon_\frac{n-1}{2}(q)\times GL^\epsilon_1(q) \right)|$ \\
& $n$ odd, $q$ odd & $(-1)^{\frac{n+1}{2}} u_\frac{n-1}{2}$ & $|GL^\epsilon_n(q):GL^\epsilon_\frac{n-1}{2}(q) \times GL^\epsilon_\frac{n+1}{2}(q)|$\\[0.3em] \hline \noalign{\vskip0.1em}
$PSp_n(q), n \ge 4$ & $\frac{n}{2}$ even, $q$ even & $c_{\frac{n}{2}}$ & $ |Sp_n(q): [q^{ \frac{1}{2}(\frac{n^2}{4}+\frac{3n}{2}-2)}].Sp_{\frac{n}{2}-2}(q)|$ \\
& $\frac{n}{2}$ odd, $q$ even & $b_{\frac{n}{2}}$ & $|Sp_n(q): [q^{\frac{n}{4}(\frac{n}{2}+1)}].Sp_{\frac{n}{2}-1}(q)|$ \\
& $q \equiv 1 \mod 4$ & $s$ & $|Sp_n(q):GL_\frac{n}{2}(q).2|$\\
& $q \equiv 3 \mod 4$ & $t$ & $|Sp_n(q):GU_{\frac{n}{2}}(q).2|$ \\ \hline  \noalign{\vskip0.1em}
$P\Omega^+_n(q), n \ge 8$ & $\frac{n}{2}$ even, $q$ even & $c_\frac{n}{2}$ & $|\Omega^+_n(q):[q^{ \frac{1}{2}(\frac{n^2}{4}+\frac{n}{2}-2)}].Sp_{\frac{n}{2}-2}(q) \cap \Omega_n^+(q)|$ \\
& $\frac{n}{2}$ even, $q^\frac{n}{4} \equiv \epsilon \mod  4$  & $u_\frac{n}{2}$ & $|\Omega^+_n(q): O^\epsilon_\frac{n}{2}(q)^2 .2 \cap \Omega^+_n(q)|$ \\
& $\frac{n}{2}$ odd, $q$ even & $c_{\frac{n}{2}-1}$ & $|\Omega^+_n(q) :[q^{\frac{1}{2}(\frac{n^2}{4}+\frac{3n}{2}-10)}] .\left(  Sp_{\frac{n}{2}-3}(q) \times Sp_2(q) \right)\cap \Omega^+_n(q)| $ \\
& $\frac{n}{2}$ odd, $q$ odd & $u_{\frac{n}{2}-1}$ & $|\Omega^+_n(q):O_{\frac{n}{2}-1}^+(q) \times O_{\frac{n}{2}+1}^+(q) \cap \Omega^+_n(q)|$ \\[0.3em] \hline  \noalign{\vskip0.1em}
$P\Omega_n^-(q), n \ge 8$ & $\frac{n}{2}$ even, $q$ even & $c_\frac{n}{2}$ & $|\Omega^-_n(q):[q^{ \frac{1}{2}(\frac{n^2}{4}+\frac{n}{2}-2)}].Sp_{\frac{n}{2}-2}(q) \cap \Omega_n^-(q)|$ \\
& $\frac{n}{2}$ even, $q$ odd & $u_\frac{n}{2}$ & $|\Omega^-_n(q):\left( O_\frac{n}{2}^+(q) \times O_\frac{n}{2}^-(q) \right).2 \cap \Omega^-_n(q)|$ \\
& $\frac{n}{2}$ odd, $q$ even & $c_{\frac{n}{2}-1}$ & $|\Omega^-_n(q) :[q^{\frac{1}{2}(\frac{n^2}{4}+\frac{3n}{2}-10)}].\left(  Sp_{\frac{n}{2}-3}(q) \times Sp_2(q) \right) \cap \Omega^-_n(q)| $ \\
& $\frac{n}{2}$ odd, $q^{\frac{n-2}{4}} \equiv \epsilon \mod 4$ & $u_{\frac{n}{2}-\epsilon}$ & $|\Omega^-_n(q):O_{\frac{n}{2}-1}^\epsilon(q) \times O_{\frac{n}{2}+1}^{-\epsilon}(q) \cap \Omega^-_n(q)|$\\[0.3em] \hline
$P\Omega_n(q) \ (nq \ \text{odd}), n \ge 7$  & $q^{\lfloor \frac{n+1}{4} \rfloor} \equiv \epsilon \mod 4$ & $(-1)^{\frac{n+1}{2}}u_\frac{n-1}{2}$ & $|\Omega_n(q):O^\epsilon_{2\lfloor \frac{n+1}{4} \rfloor}(q) \times O_{n-2\lfloor \frac{n+1}{4} \rfloor}(q) \cap \Omega_n(q)|$ \\[0.3em] \hline

\end{longtable}
\normalsize
\setlength\LTleft{0cm}

\vspace{-1.39cm}

\section*{\center{4. Maximal subgroups with order divisible by $r$}}
\stepcounter{section}

Let $G$ be a finite simple classical group described in Theorem 2 -- that is, with natural module $n \ge 8$ and $G \ne P\Omega_8^+(2)$. If $G=P\Omega^\epsilon_n(q)$, let $|PSO^\epsilon_n(q):P\Omega^\epsilon_n(q)|=a_\epsilon \in \{ 1,2 \}$ and let $|Z(\Omega^\epsilon_n(q))|=z_\epsilon \in \{1,2\}$ . The values $a_\epsilon, z_\epsilon$ can be found in \cite[\S~2]{KlLi}. Recall the definition of $r$, a primitive prime divisor of $q^e-1$ for $e$ listed in Table 1.

\par The subgroup structure of $G$ is well-understood due to a theorem of Aschbacher \cite{As}. The theorem states that if $M$ is a maximal subgroup of $G$, then $M$ lies in a natural collection $\mathscr{C}_1, \dots, \mathscr{C}_8$, or $M \in \mathscr{S}$. Subgroups in $\mathscr{C}_i$ are described in detail in \cite{KlLi}, where the structure and number of conjugacy classes are given. Subgroups in class $\mathscr{S}$ are almost simple groups which act absolutely irreducibly on the natural module $V$ of $G$. The orders of subgroups in classes $\mathscr{C}_1, \dots, \mathscr{C}_8$ can easily be computed using \cite[\S~4]{KlLi}, and this yields the result below.

\begin{proposition}
Let $M$ be a maximal subgroup of $G$ with order divisible by $r$, and assume $M$ lies in one of the Aschbacher classes $\mathscr{C}_1, \dots, \mathscr{C}_8$. Then $M$ is conjugate to a subgroup listed in Table 6. The number of $G$-conjugacy classes of each type, $c_M$, is also listed. 
\end{proposition}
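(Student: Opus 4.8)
The plan is to go through Aschbacher's classes $\mathscr{C}_1,\dots,\mathscr{C}_8$ one at a time, and in each case determine which members have order divisible by $r$, where $r$ is a primitive prime divisor of $q^e-1$ with $e$ as in Table~1. The key numerical fact driving the whole argument is that if $r \mid |H|$ for a classical-type group $H$ defined over $\mathbb{F}_{q^\delta}$ with natural module of dimension $m$, then (using that $q$ has multiplicative order $e$ mod $r$, so $e \mid \operatorname{ord}_r(q^i)$ considerations) the dimension $m$ must be large enough to ``fit'' a $q^e-1$-torus; concretely $r \mid |GL_m(q)|$ forces $m \ge e$, $r \mid |Sp_m(q)|$ or $|\Omega^\pm_m(q)|$ forces $m \ge e$ (with parity constraints), and $r \mid |GU_m(q)|$ forces $2m \ge e$ or $2m-2\ge e$ depending on parity, etc. Since $e$ is chosen in Table~1 to be very close to $n$ (namely $n$, $n-1$, $n-2$, $2n$ or $2n-2$), this severely restricts the subfield, tensor, imprimitive and field-extension constructions.

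First I would dispose of the ``large index'' classes. In $\mathscr{C}_2$ (imprimitive, stabiliser of a decomposition $V = \bigoplus V_i$ into $t\ge 2$ summands of dimension $n/t$, or $V=\bigoplus V_i$ with $V_i$ totally singular of dimension $n/2$): a wreath product $Cl_{n/t}(q)\wr S_t$ has order divisible by $r$ only if $r \mid |S_t|$ or $r$ divides the order of a factor $Cl_{n/t}(q)$; since $r \equiv 1 \bmod e$ and $e$ is close to $n$, $r > t$ in all relevant cases, so we need $r$ to divide a factor, forcing $n/t \ge e$ which combined with $e$ near $n$ forces $t=1$ — except for the genuinely occurring cases $GL_{n/2}(q)\wr S_2$ type stabilisers and the totally singular decomposition $GL_{n/2}(q).2$ in orthogonal groups, which must be kept. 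Similarly $\mathscr{C}_4$ (tensor product $V = V_1\otimes V_2$) and $\mathscr{C}_7$ (tensor induced) and $\mathscr{C}_6$ (normaliser of a symplectic-type $r$-group, of order bounded polynomially in the small primes) are killed because the component dimensions $n_1, n_2$ with $n_1 n_2 = n$, $n_i\ge 2$, cannot accommodate a factor of order divisible by $r$ when $e$ is within $2$ of $n$. For $\mathscr{C}_3$ (field extension subgroups $Cl_{n/s}(q^s).s$, $s$ prime) and $\mathscr{C}_5$ (subfield subgroups $Cl_n(q_0)$ with $q=q_0^k$): here one must be careful — a primitive prime divisor of $q^e-1$ can divide $|Cl_{n/s}(q^s)|$ when $s \mid e$, and can divide $|Cl_n(q_0)|$ when the relevant primitive prime divisor relation lifts; these genuinely contribute to Table~6 and must be recorded with the correct constraints (e.g. only for those $s$ dividing $e$, and only certain subfield indices). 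The unitary cases need extra care because of the $\delta=2$ convention and because $GU$ absorbs primitive prime divisors of $q^{2j}-1$ for odd $j$.

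Then I would handle $\mathscr{C}_1$ (stabilisers of totally singular or non-degenerate subspaces), which is where most of the surviving subgroups live. A reducible subgroup $P_k$ or $N_k$ contains a classical group on a section of dimension roughly $n-k$ or $k$; for $r$ to divide its order we need this section's dimension to be at least $e$ (up to the parity/type constraints), so only $k$ very small (or, dually, $k$ very close to $n$, which does not give a maximal subgroup beyond the small values) survives; the precise list of surviving parabolics and non-degenerate stabilisers, together with their structure from \cite[\S4]{KlLi}, goes into Table~6. Finally $\mathscr{C}_8$ (classical subgroups: $Sp$, $O^\pm$, $GU$ inside $GL$, etc.) is easy — those are full-dimensional classical groups, always have order divisible by $r$ for the relevant $e$, and are simply listed. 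Throughout, the number of conjugacy classes $c_M$ of each type is read off directly from \cite{KlLi} (it depends on $\gcd$'s with $n$, $q-1$, the parameters $a_\epsilon, z_\epsilon$, etc.), and I would tabulate these alongside.

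The main obstacle I anticipate is the bookkeeping in classes $\mathscr{C}_3$ and $\mathscr{C}_5$, and in the orthogonal and unitary cases generally: deciding exactly when a primitive prime divisor of $q^e-1$ divides the order of a subfield or field-extension subgroup requires tracking how $e$ interacts with the extension degree $s$ or $k$ and with the type ($+$, $-$, unitary), and one has to invoke the non-uniqueness of $r$ carefully — the statement only asserts existence of \emph{some} primitive prime divisor with the stated containment behaviour, so edge cases where $q^e-1$ has a Zsigmondy prime that happens also to divide $|Cl_n(q_0)|$ must be included. The orthogonal groups are the worst offenders because $e = n-2$ for type $+$ means a primitive prime divisor of $q^{n-2}-1$ can sit inside an $O^-_{n-1}(q)$-type or $GL_{n/2}(q)$-type subgroup, so several families genuinely appear in Table~6 and the parity of $n/2$ matters. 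I would organise the proof as a short paragraph per Aschbacher class, in each case stating the divisibility constraint, solving it for the admissible parameters, and citing \cite[\S4]{KlLi} for the structure and \cite[\S2, \S3]{KlLi} (or the tables therein) for $c_M$ — deferring the verification that Table~6 is complete to a remark that it is a finite check against the $\mathscr{C}_i$ lists.
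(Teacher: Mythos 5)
Your overall strategy -- run through the Aschbacher classes, decide divisibility of $|M|$ by $r$ using the order formulas of \cite[\S~4]{KlLi} together with the fact that $r \mid q^i-1$ forces $e \mid i$, and read off the class numbers $c_M$ from \cite{KlLi} -- is exactly the (very short) argument the paper gives. The problem is in the execution: your case analysis, as sketched, produces a different table from Table 6. First, you dismiss $\mathscr{C}_6$ entirely, and with tensor-product reasoning ($n_1n_2=n$) that is not even the relevant structure for normalizers of symplectic-type $2$-groups. Since $e \in \{n-2,n-1,n\}$ and $r \equiv 1 \bmod e$, the prime $r$ can be as small as $n-1$ or $n+1$, and then $r$ divides $|Sp_{2k}(2)|$ or $|O^\pm_{2k}(2)|$ when $n=2^k$; the subgroups $2^{2k}Sp_{2k}(2)$, $2^{2k}O^\pm_{2k}(2)$ genuinely occur and occupy several rows of Table 6. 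Second, the same small-$r$ phenomenon breaks your $\mathscr{C}_2$ claim that ``$r>t$ in all relevant cases'': for the decomposition into $n$ one-dimensional summands ($t=n$, type $O_1(q)\operatorname{wr} S_n$ in orthogonal groups) one can have $r=n-1$ or $r=n$ dividing $|S_n|$, and these are precisely the $\mathscr{C}_2$ rows of Table 6 (they also feed the $\Sigma_2$ estimates later in the paper). Conversely, the subgroups you propose to keep -- $GL_{n/2}(q)\wr S_2$-type stabilisers and the totally singular $GL_{n/2}(q).2$ stabiliser -- have order divisible by $r$ only if $e \le n/2$, impossible for $n \ge 8$ with $e \ge n-2$, so they must not appear.

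Third, you assert that $\mathscr{C}_5$ subfield subgroups ``genuinely contribute to Table~6 and must be recorded'': Table 6 contains no $\mathscr{C}_5$ entries, so as a proof of the stated proposition this cannot stand as written. Your instinct that this class needs care is not frivolous -- with the paper's base-$q$ definition of primitive prime divisor one must genuinely analyse how $r$ interacts with $q=q_0^k$ (for instance whether a prime of order $e$ modulo $q_0$ can serve as the chosen $r$) -- but your parenthetical ``only for those $s$ dividing $e$, and only certain subfield indices'' neither carries out that analysis nor arrives at the conclusion the proposition asserts. In short: the missing idea is that because $e$ is within $2$ of $n$ (or $2n$), the congruence $r \equiv 1 \bmod e$ permits $r \in \{n-1,n,n+1\}$, so $r$ need not divide a large classical factor defined over $\mathbb{F}_q$ at all; it can instead divide $|S_n|$ or the order of a group over $\mathbb{F}_2$, and those are exactly the rows of Table 6 your argument loses, while the rows you add are spurious.
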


 \par Table 6 also lists bounds $I_2(M)$ and $ N_M$, where $i_2(M) \le I_2(M)$ and $N_M \le |N_M(\langle x \rangle)|$ where $x \in G$ is an element of order $r$ contained in $M$. These bounds are justified in Propositions 5.1 and 6.4 below.

\begin{sidewaystable}[!]
\renewcommand{\arraystretch}{1.85}
    \caption{Maximal subgroups $M \notin \mathscr{S}$ with $r \mid |M|$}
  \bigskip
    \centering
\footnotesize
\setlength\tabcolsep{2.5pt}
 \hspace*{-1cm}
\begin{tabular}{c  c  c  c  c  c c}
           \toprule
$G$  & Class & Type of $M$ & Conditions & $c_M$ & $I_2(M)$ & $N_M$ \\ \hline
$PSL_n(q), n \ge 2$ & $\mathscr{C}_3$ & $GL_k(q^t).t$ & $n=kt$, $t$ prime & 1 &  $2\frac{q^{2t}-1}{q-1}q^{\frac{n^2}{2t}+\frac{n}{2}-2t}$ & $|N_G(\langle x \rangle)|$  \\
& $\mathscr{C}_6$ & $2^{2k}Sp_{2k}(2)$ & $n=2^k, r=2^k+1, q$ odd & $(q-1, n)$ & $2^{\log_2(n)(2\log_2(n)+3)}$ & $r$ \\
& $\mathscr{C}_8 $ & $PSp_n(q)$ & $n$ even & $(q-1, \frac{n}{2})$  & $2(q+1)q^{\frac{n^2+2n-4}{4}}$ & $ \frac{n(q^\frac{n}{2}+1)}{(2, q-1)}$\\
& & $PSO^-_n(q)$ & $n$ even, $q$ odd & $\frac{(q-1, n)}{2}$ & $2(q+1)q^{\frac{n^2-4}{4}}$ & $\frac{n(q^\frac{n}{2}+1)}{2a_-}$\\
& & $PSU_n(q^\frac{1}{2})$ & $n$ odd, $q$ square & $\frac{q-1}{[q^\frac{1}{2}+1, \frac{q-1}{(q-1, n)}]}$ & $ 2(q^\frac{1}{2}+1)q^{\frac{n^2+n-4}{4}}$ & $\frac{n(q^\frac{n}{2}+1)}{(q^\frac{1}{2}+1)(n, q^\frac{1}{2}+1)}$ \\[0.6em] \hline
$PSp_n(q), n \ge 4$ & $\mathscr{C}_3$ & $Sp_k(q^t).t$ & $n=kt, k$ even, $t$ prime & 1 & $2(q^t+1)q^{\frac{n^2}{4t}+\frac{n}{2}-t}$ & $|N_G(\langle x \rangle)|$\\
& & $GU_\frac{n}{2}(q).2$ & $\frac{n}{2}$ odd, $q$ odd & 1 & $(q+1)^2q^\frac{n^2+2n-16}{8}$ & $|N_G(\langle x \rangle)|$ \\
& $\mathscr{C}_6$ & $2^{2k}O_{2k}^-(2)$ & $n=2^k, q=p$ odd, $r=n+1$ & 1 or 2 & $2^{2\log^2_2(n)+\log_2(n)+1}$ & $r$ \\
& $\mathscr{C}_8$ & $PSO_n^-(q)$ & $q$ even & 1 & $2(q+1)q^{\frac{n^2-4}{4}}$ & $ \frac{n(q^\frac{n}{2}+1)}{2}$  \\ \hline
$P\Omega^+_n(q), n \ge 8$ & $\mathscr{C}_1$ & $O^-_{n-2}(q) \times O^-_2(q)$ & & 1 & $2(q+1)^2q^{\frac{n^2-4n}{4}}$ & $|N_G(\langle x \rangle)|$  \\
& & $O_{n-1}(q) \times  O_{1}(q)$ & $q$ odd & 2 & $\frac{4}{z_+}(q+1)q^{\frac{n^2-2n-4}{4}}$ & $\frac{(n-2)(q^{\frac{n}{2}-1}+1)}{a_+}$ \\
& & $O_{n-1}(q)$ & $q$ even & 1 & $2(q+1)q^{\frac{n^2-2n-4}{4}}$ & $\frac{(n-2)(q^{\frac{n}{2}-1}+1)}{2}$ \\
& $\mathscr{C}_2$ & $O_1(q) \operatorname{wr} S_n$ & $q=p$ odd, $r=n-1$ & 2 or 4  & $2^{n-1}n! $ & $r$ \\
& $\mathscr{C}_3$ & $O_\frac{n}{2}(q^2).2$ & $\frac{n}{2}$ odd, $q$ odd & 1 or 2 & $\frac{4}{z_+}(q+1)q^{\frac{n^2-20}{8}}$ & $\frac{(n-2)(q^{\frac{n}{2}-1}+1)}{4a_+}$ \\
& & $GU_\frac{n}{2}(q).2$ & $\frac{n}{2}$ even & 2 & $ \frac{2}{z_+}(q+1)^2q^{\frac{n^2+2n-16}{8}}$ & $|N_G(\langle x \rangle)|$ \\
& $\mathscr{C}_6$ & $2^{2k}O^+_{2k}(2)$ & $n=2^k, q=p$ odd, $r=n-1$ & 4 or 8 & $2^{\log_2(n)(2\log_2(n)+1)}$ & $ r$\\ \hline
$P\Omega_n^-(q), n \ge 8$ & $\mathscr{C}_3$ & $O_k^-(q^t).t$ & $n=kt, t$ prime, $k \ge 4$ & 1 & $2(q^t+1)q^{\frac{n^2}{4t}-t}$ & $|N_G(\langle x \rangle)|$\\
& & $GU_{\frac{n}{2}}(q).2$ & $\frac{n}{2}$ odd & 1 & $\frac{2}{z_-}(q+1)^2q^\frac{n^2+2n-16}{8}$  & $|N_G(\langle x \rangle)|$\\ \hline
$P\Omega_n(q), n \ge 7, nq$ odd & $\mathscr{C}_1$ & $O^-_{n-1}(q) \times O_1(q)$ & & 1 & $4(q+1)q^{\frac{n^2-2n-3}{4}}$  & $|N_G(\langle x \rangle)|$\\
& $\mathscr{C}_2$ & $O_1(q) \operatorname{wr} S_n$ & $q=p, r=n$ & 1 or 2 & $2^{n-1}n! $ & $r$ \\ \hline
$PSU_n(q), n \ge 3, n$ odd & $\mathscr{C}_3$ & $GU_k(q^t).t$ & $n=kt, t$ prime, $t \ge 3$  & 1 & $ \frac{2(q^t+1)^2}{q+1}q^{\frac{n^2}{2t}+\frac{n}{2}-2t}$ & $|N_G(\langle x \rangle)|$ \\ \hline
$PSU_n(q), n \ge 4, n$ even & $\mathscr{C}_1$ & $GU_{n-1}(q) \times GU_1(q)$ & & 1  & $2(q+1)q^{\frac{n^2-n-4}{2}}$ & $|N_G(\langle x \rangle)|$\\ \hline
\end{tabular}\hspace*{-1cm}
\end{sidewaystable}

\par The subgroups $M \in \mathscr{S}$ with order divisible by $r$ are given by \cite{GPPS}. We first list these subgroups with $\operatorname{soc}(M) \in \operatorname{Lie}(p')$, where $\operatorname{Lie}(p')$ is the set of finite simple groups of Lie type with characteristic not equal to $p$.

\par To state the next result, let 
\begin{align*} 
e_G = \left\{ 
  \begin{array}{l l}
(n, q-\epsilon) \hspace{0.5cm} &\text{if} \ G=PSL^\epsilon_n(q) \\[-0.1cm]
(2,q-1) &\text{if} \ G=PSp_n(q) \\[-0.1cm]
a_\epsilon(2, q-1)^2 &\text{if} \ G=P\Omega^\epsilon_n(q) \ (n \ \text{even})\\[-0.1cm]
2 &\text{if} \ G=P\Omega_n(q) \ (nq \  \text{odd}). \\[-0.1cm]
\end{array}
\right. \\
\end{align*}

\vspace{-0.8cm}

\noindent Then $e_G$ is the index of $G$ in the projective similarity group of the same type (see \cite[\S~2]{KlLi}).

\begin{proposition}
Let $M$ be a maximal subgroup of $G$ with order divisible by $r$, and assume $M \in \mathscr{S}$ with $\operatorname{soc}(M) \in \operatorname{Lie}(p')$. The possibilities for $\operatorname{soc}(M)$ are listed in Table 7. Upper bounds $C_{M}$ are given for the number of $G$-conjugacy classes of maximal subgroups with each listed socle.
\end{proposition}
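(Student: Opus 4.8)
The plan is to invoke the classification of subgroups in Aschbacher class $\mathscr{S}$ whose order is divisible by a primitive prime divisor $r = r_{q,e}$, which is precisely the content of \cite{GPPS} (the Guralnick--Penttila--Praeger--Saxl theorem). That result lists, for a classical group $G$ acting on a module of dimension $n$ over $\mathbb{F}_{q^\delta}$, all the irreducible subgroups $H \le \Gamma L_n(q^\delta)$ of order divisible by a primitive prime divisor of $q^{e}-1$ for $e$ in the relevant range; crucially, since $e \in \{n-2, n-1, n, 2n-2, 2n\}$ in our setting (Table 1), we are in the ``large $e$'' regime where the list of examples is short. First I would record the value of $e$ from Table 1 for each family of $G$, note that $r \equiv 1 \bmod e$ (as observed in the introduction, $q$ has order $e$ in $\mathbb{F}_r^\times$), and observe that $r > e \ge n-2$, so $r$ is large relative to $n$. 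This forces any $H \in \mathscr{S}$ with $r \mid |H|$ to have an element of order $r$ acting irreducibly (or nearly so) on $V$, which severely restricts $\operatorname{soc}(H)$.

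Next I would partition the candidate socles according to Lie type: here we only want those with $\operatorname{soc}(M) \in \operatorname{Lie}(p')$, i.e. defining characteristic $\ne p$. For such groups, an element of order $r$ must lie in a torus, and the condition that $r$ divides the group order together with $r \equiv 1 \bmod e$ and the bound $r \ge q^{e/2}$ (or the sharper $r \ge e+1$ combined with $|M| \le |G|$) pins down the possible socles to a finite, explicitly listed family — essentially $PSL_2(r)$, $PSL_2(r')$ for small related $r'$, and a handful of low-rank groups $PSL_{k}(q_0)$, $PSU_k(q_0)$, $PSp_{2k}(q_0)$, $P\Omega^\pm_k(q_0)$ whose natural or small-degree representations have dimension $n$ — exactly the entries that will populate Table 7. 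For each such socle $S$, the number of $G$-conjugacy classes of maximal subgroups $M$ with $\operatorname{soc}(M) = S$ is bounded above by the number of $\operatorname{Aut}(G)$-classes times an outer-automorphism factor; this is controlled by $|\operatorname{Out}(S)|$ and by the multiplicity of the relevant $S$-module of dimension $n$, and in every case reduces to a small constant or a slowly-growing function such as $e_G$ or $\log q$. These give the bounds $C_M$ in Table 7.

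The main obstacle I anticipate is not the identification of the socles — \cite{GPPS} does that wholesale — but rather the bookkeeping required to bound the \emph{number} of conjugacy classes $C_M$ cleanly and uniformly across all the families in Table 1, since two sources of multiplicity interact: distinct $S$-modules of dimension $n$ over $\mathbb{F}_{q^\delta}$ affording subgroups in $\mathscr{S}$, and the fusion of $\operatorname{Aut}(G)$-classes into $G$-classes (governed by $\operatorname{Out}(G)$, hence by $e_G$, diagonal and field automorphisms). I would handle this by using \cite[\S 2]{KlLi} to convert the count of $\operatorname{Aut}(G)$-classes of maximal subgroups into a count of $G$-classes via the index $e_G$, and by invoking the bounds on the number of $r$-dimensional modular representations of the candidate simple groups (again available from the representation-theoretic input to \cite{GPPS}, e.g. via the work of Liebeck and others on small-dimensional representations). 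With those two ingredients the verification becomes a finite case check, one line per row of Table 7, and the proposition follows.
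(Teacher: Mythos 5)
Your proposal follows essentially the same route as the paper: the socle list is taken wholesale from \cite{GPPS}, and the class counts $C_M$ are obtained by bounding the number of inequivalent $n$-dimensional cross-characteristic modules of each candidate socle (the paper does this via the Weil-representation counts in \cite{GuTi1}, together with \cite{GMST}, \cite{HiMal}, \cite{Bu}) and then converting to $G$-classes through the index $e_G$ using \cite[Corollary~2.10.4]{KlLi}, exactly as you outline. The only difference is that the paper pins down the specific representation-theoretic references and the small exceptional cases (e.g.\ $PSL_3(2)$, $PSL_3(4)$ checked against the Atlas and modular Atlas), which your sketch leaves as the anticipated ``bookkeeping.''
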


\begin{proof}

\par Let $M \in \mathscr{S}$ with $\operatorname{soc}(M) \in \operatorname{Lie}(p')$, and let $V$ be the natural module of $G$. The possibilities for $\operatorname{soc}(M)$ are determined  in \cite{GPPS}, and we list them in Table 7 alongside necessary conditions.
\par It remains to justify the bound $C_{M}$. Consider $\operatorname{soc}(M)=S=PSL_d(s), d \ge 3$. Here $\dim V=n=\frac{s^d-1}{s-1}-\delta$ where $\delta \in \{0, 1 \}$. By \cite[Theorem~1.1]{GuTi1}, in most cases the representation of $S$ on $V$ is one of at most $s-1$ Weil representations. The exceptions are when $d=3, s=2$ or $4$, and in each case it can be verified that the number of possible representations is again at most $s-1$  using \cite{Atlas} and \cite{ModAtlas}. Hence the number of $PGL(V)$-classes of such subgroups $M$ is at most $s-1$. If $G=PSL_n(q)$, it follows that that number of $G$-classes of subgroups $M$ is at most $(s-1)|PGL_n(q):PSL_n(q)|=(s-1) e_G$. If $G=P\Omega^\epsilon_n(q)$, Corollary 2.10.4 of \cite{KlLi} shows that the number of $PGO^\epsilon_n(q)$-classes of subgroups $M$ is also at most $s-1$, and so the number of $G$-classes is at most $(s-1)|PGO^\epsilon_n(q):P\Omega^\epsilon_n(q)|=(s-1)e_G$.
\par The remaining possibilities for $\operatorname{soc}(M)$ and $G$ are dealt with similarly, using \cite{GMST}, \cite{HiMal}, \cite{Bu} for $\operatorname{soc}(M)=PSp_{2d}(s), PSU_d(s), PSL_2(s)$ respectively. \end{proof}

\vspace{-0.3cm}

\renewcommand{\arraystretch}{1.5}
\begin{footnotesize}
\setlength\LTleft{0.65cm}
\begin{longtable}{c  c  c  c }

\caption {Maximal subgroups $M \in \mathscr{S}$ with $\operatorname{soc}(M) \in \operatorname{Lie}(p'), r \mid |M|$} \\

$G$  & $\operatorname{soc}(M)$ & Conditions & $C_{M}/e_G$ \\ \hline
$PSL_n(q), PSp_n(q),$ & $PSL_d(s)$ & $d \ge 3$ prime, $n=\frac{s^d-1}{s-1}-1, r=n+1$ & $s-1$   \\
$  P\Omega^-_n(q)$ & $PSp_{2d}(s)$ & $s \ne 3$ odd, $d =2^b \ge 2, n=\frac{1}{2}(s^d-1), r=n+1$ &  $4$    \\
& $PSU_d(s)$ & $d$ prime, $n=\frac{s^d+1}{s+1}-1, r=n+1$ & $s+1$    \\
& $PSL_2(n)$ & $n=2^b, b=2^{b'}, r=n+1$ & $1$ \\
& $PSL_2(n+1)$ & $r=n+1$ & $\frac{n}{4}$ \\
& $PSL_2(2n+1)$ & $r=n+1$ or $2n+1$ & $2$ \\ \hline
$P\Omega^+_n(q)$ & $PSp_{2d}(3)$ & $n=\frac{3^d+1}{2}$ odd, $d \ge 3$ prime, $r=n-1$ &  $4$  \\
& $PSL_2(n-1)$ & $r=n-1$ & $\frac{n-4}{4}$ \\
& $PSL_2(n)$ & $n=2^b, b$ prime, $r=n-1$ & $1$  \\
& $PSL_2(2n-1)$ & $r=n-1$  & $2$  \\  \hline
$P\Omega_n(q), nq$ odd & $PSL_d(s)$ & $d \ge 3$ prime, $n=\frac{s^d-1}{s-1}, r=n$ & $s-1$   \\
 & $PSp_{2d}(s)$ & $s \ne 3$ odd, $d =2^b \ge 2, n=\frac{1}{2}(s^d+1), r=n$ &  $4$   \\
 & $PSp_{2d}(3)$ & $d$ an odd prime, $n=\frac{1}{2}(3^d-1), r=n$ &  $4$   \\
 & $PSU_d(s)$ & $d$ prime, $n=\frac{s^d+1}{s+1}, r=n$ & $s+1$  \\
& $PSL_2(n-1)$ & $r=n$ & $\frac{n-3}{2}$ \\
& $PSL_2(n)$ & $n=2^b, b=2^{b'}, r=n$ & $1$  \\
& $PSL_2(n+1)$ & $r=n$ & $\frac{n+1}{2}$ \\
 & $PSL_2(2n-1)$ & $r=n$ or $2n-1$ & $2$ \\
& $PSL_2(2n+1)$ & $r=n$ & $2$ \\  \hline
$PSU_n(q), n$ odd & $PSL_2(2n+1)$ & $r=2n+1$  & $2$ \\ \hline
$PSU_n(q), n$ even & $PSL_2(2n-1)$ & $r=2n-1$ & $2$  \\ \hline
\end{longtable}
\end{footnotesize}
\setlength\LTleft{0cm}

\vspace{0.3cm}

\par We denote by $c_{\mathscr{S}}$ the number of $G$-classes of subgroups $M \in \mathscr{S}$ with order divisible by $r$ and $\operatorname{soc}(M)$ not an alternating group.

\begin{corollary}
We have $c_\mathscr{S} \le C_\mathscr{S}$, where $C_\mathscr{S}$ is given in Table 8 for each $G$.

\end{corollary}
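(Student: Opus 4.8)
The plan is to obtain Table~8 by enumerating the possibilities for $\operatorname{soc}(M)$ that are \emph{not} covered by Proposition~4.2 and then adding the resulting class-count bounds to those already recorded there. The starting point is again \cite{GPPS}: because $r$ is a primitive prime divisor of $q^e-1$ with $e$ as large as in Table~1, that classification lists \emph{every} $M\in\mathscr{S}$ with $r\mid|M|$, and $\operatorname{soc}(M)$ falls into one of four families --- simple of Lie type in characteristic $\ne p$, simple of Lie type in characteristic $p$, sporadic, or alternating. Alternating socles are excluded from $c_{\mathscr{S}}$ by definition, and the Lie-type-in-characteristic-$\ne p$ family is exactly Proposition~4.2, whose bounds $C_M$ (one per row of Table~7) sum to the leading part of $C_{\mathscr{S}}$. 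So it remains to treat the other two families and add up.

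For $\operatorname{soc}(M)=S$ of Lie type in the defining characteristic $p$, I would argue that $r\mid|S|$ is extremely restrictive. Writing $S$ over $\mathbb{F}_{p^{f_0}}$, every prime divisor of $|S|$ other than $p$ divides some $p^{f_0 h}-1$ with $h$ bounded linearly in the Lie rank of $S$; since $S$ acts absolutely irreducibly on the $n$-dimensional module $V$, the Landazuri--Seitz-type minimal-dimension bounds for groups of Lie type in characteristic $p$ (together with the Galois-descent constraint that the representation be realizable over $\mathbb{F}_{q^\delta}$) bound the rank of $S$, hence every such $h$, in terms of $n$. Since $r$ is a primitive prime divisor of $p^{ef\delta}-1$ with $e$ of size $\approx n$, this pins $f_0$ and the rank of $S$ so tightly that for $n\ge 8$ the only survivors are the geometric subfield/field-extension configurations, which lie in $\mathscr{C}_5$ or $\mathscr{C}_3$ rather than in $\mathscr{S}$; thus this family contributes no further $G$-classes. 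Any residual exceptional entries on the \cite{GPPS} list in this range would be read off directly and bounded exactly as in Proposition~4.2 (the number of $n$-dimensional absolutely irreducible representations of $M$, times the index $e_G$, using \cite[Cor.~2.10.4]{KlLi} in the orthogonal case).

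For $\operatorname{soc}(M)$ sporadic, $|\operatorname{soc}(M)|$ is bounded while $r\equiv 1\bmod e$ and $e$ grows with $n$, so $r\mid|M|$ forces $e$, and hence $n$, to be bounded; with $n\ge 8$ only finitely many pairs $(G,M)$ remain, each handled by a direct computation with \cite{Atlas} and \cite{ModAtlas} --- counting $n$-dimensional absolutely irreducible representations over $\mathbb{F}_{q^\delta}$ and multiplying by $e_G$ --- giving a bounded contribution. Summing the three contributions (Lie type in characteristic $\ne p$ from Table~7, Lie type in characteristic $p$, and sporadic) produces the value $C_{\mathscr{S}}$ tabulated for each $G$ in Table~8, which establishes $c_{\mathscr{S}}\le C_{\mathscr{S}}$.

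The step I expect to be the main obstacle is the defining-characteristic analysis: reading the \cite{GPPS} tables correctly for each of the families $PSL^\epsilon_n$, $PSp_n$, $P\Omega^\epsilon_n$ with its prescribed value of $e$, checking that no stray $\mathscr{S}$-subgroup whose order is divisible by the near-maximal primitive prime divisor escapes for $n\ge 8$, and --- as in Proposition~4.2 --- converting $PGL(V)$- or $PGO(V)$-conjugacy counts uniformly into $G$-conjugacy counts through the index $e_G$. By comparison, the sporadic family and the exclusion of alternating socles amount to routine bookkeeping.
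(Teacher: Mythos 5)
Your high-level decomposition (exclude alternating socles, use Proposition 4.2 for $\operatorname{soc}(M)\in\operatorname{Lie}(p')$, treat the remaining socles separately, then sum) matches the paper's, but there are two genuine gaps. First, your treatment of the defining-characteristic socles is a heuristic that reaches the wrong conclusion. The paper does not argue that this family is empty; it handles all $M\in\mathscr{S}$ with $\operatorname{soc}(M)\notin\operatorname{Lie}(p')$ (defining characteristic together with sporadic) by reading the number $c$ of $GL_n(q)$-classes directly from \cite[Example~2.7]{GPPS} and converting to at most $ce_G$ $G$-classes via \cite[Corollary~2.10.4]{KlLi}; for $G=PSL_n(q)$ this gives $c\le 6$, and the resulting $6e_G$ is visibly part of the constant term in $C_\mathscr{S}=(n^2+\tfrac{21}{4}n-1)e_G$. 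Your claim that for $n\ge 8$ every absolutely irreducible defining-characteristic subgroup with $r\mid|M|$ is really a $\mathscr{C}_3$/$\mathscr{C}_5$ configuration is not justified by the Landazuri--Seitz-type sketch you give, and it is false as a general principle (spin and exceptional embeddings such as $P\Omega_7(q)<P\Omega_8^+(q)$ or $G_2(q)<\Omega_7(q)$ are $\mathscr{S}$-subgroups in defining characteristic whose orders are divisible by the relevant primitive prime divisor); whether any such example survives for the particular groups and values of $e$ in Table 8 is exactly the bookkeeping you would have to do, and the paper does it by citing the GPPS count rather than by an a priori exclusion.

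Second, and more importantly, you never carry out the counting step that actually produces the Table 8 polynomials. Each row of Table 7 is a family parameterized by the field size $s$, with per-class bound $C_M/e_G$ equal to $s-1$, $s+1$, $4$, etc.; to bound $c_\mathscr{S}$ one must bound how many values of $s$ can occur (the paper notes $s\le n$ and that for each $s$ there is at most one admissible $d$ with $n=\frac{s^d-1}{s-1}-1$, and similarly for the other families) and then sum over $s$, e.g.\ $\sum_{s=2}^{n}(s-1)e_G=\tfrac12 n(n-1)e_G$ for $PSL_d(s)$ and $\sum_{s=2}^{n}(s+1)e_G=\tfrac12(n-1)(n+4)e_G$ for $PSU_d(s)$, plus $4(n-2)e_G$ for $PSp_{2d}(s)$ and the $PSL_2$ contributions $e_G$, $\tfrac14 ne_G$, $2e_G$. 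Your phrase ``one per row of Table 7'' elides this: summing a single bound per row would give a bound linear in $n$, not the quadratic bounds of Table 8. Without the enumeration of admissible $s$ and the explicit summation (done separately for each family of $G$, since the admissible socles differ), the inequality $c_\mathscr{S}\le C_\mathscr{S}$ with the stated values of $C_\mathscr{S}$ is asserted rather than proved.
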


\vspace{-0.0cm}

\begin{table}[h]
\begin{center}
\def\arraystretch{1.7}
\caption {Upper bounds for $c_\mathscr{S}$}
\begin{tabular}{ c  c  }
$G$  & $C_\mathscr{S} $   \\ \hline
$PSL_n(q), PSp_n(q), P\Omega_n^-(q), n \ge 7 $ &  $(n^2+\frac{21}{4}n-1)e_G$  \\
$P\Omega^+_n(q), n \ge 10$ &  $(\frac{1}{4}n+9)e_G$  \\
$P\Omega_n(q), n \ge 9, nq$ odd & $(n^2+6n+4)e_G$ \\ 
$PSU_n(q), n \ge 7$ & $3e_G$  \\ \hline
\end{tabular}
\end{center}
\end{table}

\vspace{-0.3cm}

\begin{proof}
\par Let $M \in \mathscr{S}$ with $\operatorname{soc}(M)$ not an alternating group. If $\operatorname{soc}(M) \notin \operatorname{Lie}(p')$, the number $c$ of $GL_n(q)$-conjugacy classes of such $M$ is given in \cite[Example~2.7]{GPPS}. By \cite[Corollary~2.10.4]{KlLi}, the number of $G$-classes is therefore bounded above by $ce_G$. If $\operatorname{soc}(M) \in \operatorname{Lie}(p')$, the number of $G$-classes of $M$ is bounded above by $C_{M}$ in Proposition 4.2. 
\par Suppose $G=PSL_n(q)$. In this case $c \le 6$ by \cite{GPPS}, and so the contribution to $c_\mathscr{S}$ from $M \in \mathscr{S}, \operatorname{soc}(M) \notin \operatorname{Lie}(p')$ is at most $6e_G=6(n, q-1)$. The contribution to $c_{\mathscr{S}}$ from $M \in \mathscr{S}$ with $\operatorname{soc}(M) \in \operatorname{Lie}(p')$ is as follows using Proposition 4.2: 
\begin{itemize}
\item For $\operatorname{soc}(M)=PSL_d(s), d \ge 3$ we have $s \le n$. For each $s$ there exists at most one $d$ such that $n=\frac{s^d-1}{s-1}-1$ as in Table 7. Therefore, by Proposition 4.2, the contribution is less than $\sum_{s=2}^{n}(s-1)e_G=\frac{1}{2}n(n-1)e_G$;
\item For $\operatorname{soc}(M)=PSp_{2d}(s), d \ge 2$ we have $s \le n$ and $s \ne 3$, and since for each $s$ there exists at most one $d$ as above, the contribution is less than $4(n-2)e_G$;
\item For $\operatorname{soc}(M)=PSU_d(s)$ we also have $s \le n$, and, similar to above, the contribution is less than $\sum_{s=2}^{n}(s+1)e_G=\frac{1}{2}(n-1)(n+4)e_G$;
\item For $\operatorname{soc}(M)=PSL_2(s)$ we have $s=n, n+1$ or $2n+1$, contributing $e_G, \frac{1}{4}ne_G, 2e_G$ respectively.
\end{itemize}
\noindent Summing these contributions yields $C_\mathscr{S} = (n^2+\frac{21}{4}n-1)e_G$. The remaining bounds are found in a similar manner.
\end{proof}

\vspace{-1.6cm}

\section*{\center{5. Involutions in maximal subgroups}}
\stepcounter{section}
Let $G$ be a finite simple classical group described in Theorem 2. Recall our definition of $r$ as a primitive prime divisor of $q^e-1$ for $e$ listed in Table 1.

\par In this section we deduce upper bounds for the number of involutions in maximal subgroups of $G$ with order divisible by $r$.

\begin{proposition}
Let $M <_{\operatorname{max}} G$ with $r \mid |M|$. 
\vspace{-4mm}
\begin{enumerate}[label=\roman*)]
  \item If $M \notin \mathscr{S}$, then $i_2(M) \le I_2(M)$, where $I_2(M)$ is listed in Table 6;
  \item If $M \in \mathscr{S}$ and $\operatorname{soc}(M)$ is not an alternating group then $i_2(M)< q^{2n+4}$;
  \item If $M \in \mathscr{S}$ and $\operatorname{soc}(M) = A_{n'}$ for some $n' \ge 9$, then $n'=n+1$ or $n+2$ and $i_2(M)<(n+2)!$.
\end{enumerate}

\end{proposition}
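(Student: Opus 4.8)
The plan is to handle the three cases in turn, with the bulk of the work being parts (ii) and (iii). For part (i), there is nothing to prove: the bounds $I_2(M)$ in Table 6 are declared to be justified here, so the statement is simply a pointer to the row-by-row verification that accompanies Proposition 4.1. For each type of $M \notin \mathscr{S}$ listed in Table 6 one writes down the structure of $M$ (a known extension of a smaller classical group, or a $\mathscr{C}_6$-group of the form $2^{2k}.\mathrm{Sp}_{2k}(2)$ etc.), bounds $i_2$ of each composition factor using Proposition 2.3 (or Proposition 3.1 applied to the smaller classical factor), multiplies by the order of the relevant normal $p$-subgroup or the index of the derived subgroup, and checks the arithmetic against the entry in the table. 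This is routine but tedious; I would organize it as a short paragraph per Aschbacher class.

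For part (ii), the key input is that $\mathrm{soc}(M)$ is one of the groups enumerated in Table 7 or one of the $\mathscr{S}$-subgroups with $\mathrm{soc}(M) \in \mathrm{Lie}(p)$, and in every case $\mathrm{soc}(M)$ is a group of Lie type of bounded rank embedded in $G$ via a representation of dimension $n$ (or close to $n$). The strategy is to bound $i_2(\mathrm{Aut}\,\mathrm{soc}(M))$ — which dominates $i_2(M)$ since $M \le \mathrm{Aut}\,\mathrm{soc}(M)$ — using Proposition 2.3: for a group of Lie type over $\mathbb{F}_s$ with root system having $N$ positive roots and ambient dimension $D$, one gets $i_2 < 2(s^{D-N} + s^{D-N-1})$. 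The point is that for each family in Table 7 the defining field size $s$ and the Lie rank are controlled by $n$ — e.g. for $\mathrm{soc}(M) = PSL_d(s)$ one has $n \approx s^{d-1}$, so $s \le n$ and $d \le \log_2 n + 1$, giving $\dim \le d^2 \le (\log_2 n + 1)^2$ and hence $i_2(\mathrm{Aut}\,S) < 4 s^{\dim} \le 4 n^{(\log_2 n+1)^2}$, which is far below $q^{2n+4}$ once one remembers $s \le q^e$ is not the right comparison — rather $n \ge 8$ forces the crude bound through. The groups $\mathrm{soc}(M) = PSL_2(s)$ with $s \in \{n, n\pm 1, 2n\pm 1\}$ need separate attention since $i_2(PSL_2(s)) \le s(s+1)/2 \cdot$(const)$\,\le\, $ a small polynomial in $n$, comfortably under $q^{2n+4}$. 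For $\mathrm{soc}(M) \in \mathrm{Lie}(p)$ the same Proposition 2.3 bound applies with $s$ a power of $q$ and the rank bounded (these are cross-characteristic-free cases from \cite{GPPS, KlLi}), and one checks $q^{D-N} \le q^{2n+4}$ using $n \ge 8$.

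For part (iii), I would first recall that when $\mathrm{soc}(M) = A_{n'}$ acts irreducibly and absolutely irreducibly on the natural module of dimension $n$, the only possibilities with $n' \ge 9$ are the fully deleted permutation module of dimension $n'-1$ or $n'-2$ (depending on $\gcd(n', p)$ and the type of form preserved), so $n' \in \{n+1, n+2\}$; this is classical (it follows from the analysis in \cite{KlLi} of $\mathscr{S}$-subgroups with alternating socle, and the condition $n' \ge 9$ rules out the exceptional low-dimensional representations). Given this, $M \le S_{n'} \times (\text{outer diagonal/field stuff})$, but in fact $M$ is almost simple with socle $A_{n'}$, so $M \le S_{n'}$ and hence $i_2(M) \le i_2(S_{n'}) < |S_{n'}| = n'! \le (n+2)!$.

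The main obstacle is part (ii): the delicate point is making sure that \emph{every} family appearing in Table 7 (and the $\mathrm{Lie}(p)$ families coming from \cite{GPPS}) genuinely satisfies $i_2(M) < q^{2n+4}$ with a single clean bound, rather than needing case distinctions — in particular one must be careful that the Proposition 2.3 estimate is available (it excludes types ${}^2F_4, {}^2G_2, {}^2B_2$, but these do not arise here for dimension reasons) and that the outer automorphism contribution, which can multiply $i_2$ by a factor as large as the order of the outer automorphism group, is absorbed; since $|\mathrm{Out}(S)|$ is at most roughly $d \cdot \log s$ for $S$ of Lie rank $d$ over $\mathbb{F}_s$, this is a polynomial-in-$n$ factor and is swallowed by the exponential slack in $q^{2n+4}$ once $n \ge 8$.
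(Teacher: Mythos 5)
Your overall shape (reduce everything to Proposition 2.3 plus counting) is reasonable, but it diverges from the paper in part (ii), and as written it has genuine gaps there and in (iii). The paper does not argue family-by-family from Table 7: it bounds $i_2(M)\le |M|$ and quotes Liebeck's order theorems \cite{Li} (Theorems 4.1 and 4.2), which, given $r\mid |M|$, yield $|M|<q^{(2n+4)\delta}$ apart from the exceptions $(PSL_{11}(2),M_{24})$ and $(P\Omega_8^+(q),P\Omega_7(q))$, handled directly; the unitary case (where $\delta=2$ makes the order bound too weak) is settled by noting via \cite{GPPS} that the only socles are $PSL_2(2n\pm 1)$ and $J_3$. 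Your alternative of bounding $i_2(\operatorname{Aut}\operatorname{soc}(M))$ by Proposition 2.3 could in principle work, but your key estimate is false in the relevant range: $4n^{(\log_2 n+1)^2}$ is not below $q^{2n+4}$ when $q=2$ --- at $n=8$ it is about $2^{50}$ against $2^{20}$, and the inequality keeps failing until $n$ is in the hundreds. To repair it you must use $N_2\approx\tfrac12\dim$ together with the constraint $s^{d-1}\le n+2$, so that $s$ and $d$ cannot both be large; your sketch gestures at this trade-off but never carries it out. Moreover, statement (ii) covers \emph{all} non-alternating socles, and your enumeration omits the sporadic ones entirely ($J_3$ for unitary $G$, $M_{12}$, $M_{24}$, \dots), which is exactly where the delicate case lives: $|M_{24}|>2^{26}=q^{2n+4}$ for $G=PSL_{11}(2)$, so even the paper's order bound needs an exception there and one must count involutions of $M_{24}$ directly. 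The $\operatorname{Lie}(p)$ socles are likewise asserted rather than checked.

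In (iii), the conclusion $n'\in\{n+1,n+2\}$ does not follow from absolute irreducibility alone: $A_{n'}$ has many faithful absolutely irreducible modules of dimension much larger than $n'$, so without further input $n'$ could be far smaller than $n$. What pins $n'$ down is the standing hypothesis $r\mid |M|$: since $r\equiv 1 \bmod e$ with $e\ge n-2$, we have $r\ge n-1$, and $r\mid |S_{n'}|$ forces $n'\ge r$; combined with the minimal faithful degree $n'-2$ of $A_{n'}$ for $n'\ge 9$ (and the absence of degrees $n'$, $n'+1$) this gives $n'\in\{n+1,n+2\}$. This is precisely what the citation of \cite{GPPS} supplies in the paper, and your write-up omits it, though your final bound $i_2(M)\le i_2(S_{n'})<(n+2)!$ agrees with the paper's. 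One further small correction to (i): Proposition 3.1 is a \emph{lower} bound for $i_2$ and cannot be used there; the Table 6 entries come from Proposition 2.3, from $i_2(M)\le |M|$ for the $\mathscr{C}_2$ and $\mathscr{C}_6$ subgroups, and from direct bookkeeping in the $\mathscr{C}_1$, $\mathscr{C}_3$, $\mathscr{C}_8$ cases --- that part of your outline otherwise matches the paper.
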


\begin{proof}

\par We prove this case by case for each Aschbacher class. Subgroups $M <_{\operatorname{max}} G$ with $M \notin \mathscr{S}$  and $r \mid |M|$ are listed in Table 6. 

\par First suppose $M \in \mathscr{C}_1$. Then $M$ is of type $Cl_{n-k}(q) \times Cl_{k}(q)$ for some classical group $Cl_k(q)$ of the same type as $G$ and $k=1$ or $2$. We can bound the number of involutions in each factor using Proposition 2.3. If $G=P\Omega^+_n(q)$ and $M$ is of type $O_{n-2}^-(q) \times O_2^-(q)$ we have $i_2(M) \le i_2(PO_{n-2}^-(q)) i_2(PO_2^-(q))$. By Proposition 2.3, $ i_2(PO_{n-2}^-(q)) \le i_2(\operatorname{Aut}(P\Omega_{n-2}^-(q))) < 2(q+1)q^{\frac{n^2-4n}{4}}$. Using the fact that $O_2^-(q) \cong D_{2(q+1)}$ gives $i_2(PO_2^-(q)) \le q+1$, yielding the result. The other values for $I_2(M)$ are calculated in a similar fashion.

\par Next suppose $M \in \mathscr{C}_2, \mathscr{C}_6$ or $\mathscr{S}$. In most cases we bound $i_2(M)$ by $|M|$. For $M \in \mathscr{S}$ with $\operatorname{soc}(M) \ne A_{n'}$ we have $|M|<q^{3n\delta}$ by \cite[Theorem~4.1]{Li}. In fact, using the fact that $r \mid |M|$, by Theorem 4.2 of the same paper we have $|M|<q^{(2n+4)\delta}$ unless $(G,\operatorname{soc}(M)) =(PSL_{11}(2), M_{24}), (P\Omega^+_8(q), P\Omega_7(q))$. In the first case, $i_2(M_{24})=43263<2^{24}$ by \cite{Atlas}, and in the second case, $i_2(M)<2(q+1)q^{11}<q^{20}$ by Proposition 2.3. If $G$ is unitary then $M$ has socle $PSL_2(2n\pm1)$ or $J_3$ by \cite{GPPS}, and each has $i_2(M)<q^{2n+4}$, as can be seen from their respective character tables. If $\operatorname{soc}(M)=A_{n'}, n' \ge 9$ then by \cite{GPPS} we have $n'=n+1$ or $n+2$. Therefore $i_2(M) \le i_2(S_{n+2})<(n+2)!$.

\par If $M \in \mathscr{C}_3$ then $M$ is of type $Cl_k(q^t).t$ for some classical group $Cl_k(q^t)$. Suppose $G=PSL_n(q)$ and let $M$ be of type $GL_k(q^t).t$. If $k \ge 2$ we have $i_2(M) \le \frac{q^t-1}{q-1} i_2(\operatorname{Aut}(PSL_k(q^t)))$, and applying Proposition 2.3 yields the result. If $k=1$, $M$ is of type $GL_1(q^n).n$ where $n$ is prime, and since we are assuming $n>2$ we have $i_2(M)=0$. Other bounds $I_2(M)$ for $M \in \mathscr{C}_3$ are calculated in a similar fashion.

\par If $M \in \mathscr{C}_8$ then $\operatorname{soc}(M)$ is a finite simple classical group, and so we can apply Proposition 2.3 to obtain the result. 
\end{proof}

\vspace{-1.6cm}
\section*{\center{6. Proof of Theorem 2 for $n$ sufficiently large}}
\stepcounter{section}

\par Let $G$ be a finite simple classical group described in Theorem 2. If $G=P\Omega^\epsilon_n(q)$ recall the definition of $a_\epsilon=|PSO^\epsilon_n(q):P\Omega_n^\epsilon(q)|$. Also, recall that $r$ is a primitive prime divisor of $q^e-1$, where $e$ is listed in Table 1. Let $x \in G$ be an element of order $r$. 

\par In this section, except for a small number of possible exceptions (given in Proposition 6.5) we prove Theorem 2 holds in the following cases:
\vspace{-0.2cm}
\begin{align*}
&PSL_n(q), n \ge 9 \\ &PSp_n(q), n \ge 12 \\ &P\Omega^\epsilon_n(q) \  (n \ \text{even}), n \ge 14 \\ &P\Omega_n(q) \ (nq \ \text{odd}), n \ge 13 \\ &PSU_n(q), n \ge 8. 
\end{align*} 

\vspace{-0.33cm}

\noindent We first require a result on the number of conjugates of a maximal subgroup containing $x$.

\begin{lemma}
If $x$ lies in two conjugate maximal subgroups of $G$, say $M$ and $M^g$ for some $g \in G$, then $mg \in N_{G}(\langle x \rangle)$ for some $m \in M$.
\end{lemma}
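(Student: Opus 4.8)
The plan is to use a standard counting/double-coset argument. Suppose $x$ lies in both $M$ and $M^g$. The key observation is that $x$ and $x^{g^{-1}}$ are two elements of order $r$ contained in the single subgroup $M$ (indeed $x \in M^g$ means $x^{g^{-1}} \in M$). So the strategy is: first show that $\langle x \rangle$ and $\langle x^{g^{-1}} \rangle$ are conjugate in $M$, and then convert that conjugacy into the desired statement about $N_G(\langle x \rangle)$.

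For the first step, I would invoke Sylow's theorem inside $M$. Since $r$ is a primitive prime divisor of $q^e - 1$ with $e$ as in Table 1, the element $x$ has order exactly $r$, and one checks (this is implicit in the setup and in the structure of the relevant maximal subgroups) that $r^2 \nmid |G|$, hence $r^2 \nmid |M|$; thus $\langle x \rangle$ is a Sylow $r$-subgroup of $M$. Then $\langle x \rangle$ and $\langle x^{g^{-1}} \rangle$ are both Sylow $r$-subgroups of $M$, so by Sylow there exists $m \in M$ with $\langle x^{g^{-1}} \rangle^{m} = \langle x \rangle$, i.e. $\langle x \rangle^{g^{-1} m} = \langle x \rangle$ — wait, let me be careful with the side of the action: from $\langle x^{g^{-1}}\rangle = \langle x \rangle^{g^{-1}}$ and conjugating by $m$, we get $\langle x \rangle^{g^{-1} m} = \langle x \rangle$, so $g^{-1} m \in N_G(\langle x \rangle)$.

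For the second step, I simply rearrange: $g^{-1} m \in N_G(\langle x \rangle)$ gives $m^{-1} g \in N_G(\langle x\rangle)$ (normalizers are closed under inversion), and setting $m' = m^{-1} \in M$ we obtain $m' g \in N_G(\langle x \rangle)$ with $m' \in M$, which is exactly the claim. (If the paper's conjugation convention or the precise bookkeeping of which conjugate of $x$ lies in $M$ differs, the same argument runs with the roles of $g$ and $g^{-1}$, or of $x$ and a conjugate, swapped; the content is unaffected.)

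The only real content — and the step I expect to need a word of justification rather than being automatic — is the claim that $\langle x \rangle$ is a full Sylow $r$-subgroup of $M$, equivalently that $r \| |G|$ (exact power $1$). For the classical groups and values of $e$ in Table 1, $q^e - 1$ divides $|G|$ but $q^{2e} - 1$ does not (one has $e > n/2$ in all the listed cases, so the order formula for $G$ contains $q^e - 1$ to the first power only), and since $r \equiv 1 \bmod e$ forces $r > e \ge n/2$, no extra factors of $r$ can appear; hence $r \| |M|$ as well. With that in hand the Sylow argument is immediate, so this lemma is genuinely short.
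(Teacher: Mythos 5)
Your overall strategy coincides with the paper's: put $x$ and $x^{g^{-1}}$ inside $M$, conjugate suitable $r$-subgroups of $M$ by Sylow's theorem, and then do the bookkeeping with normalizers (the bookkeeping itself is fine, and the $m$ versus $m^{-1}$ issue is immaterial). The gap is in the one step you yourself flagged as the real content: the claim that $r^2 \nmid |G|$, so that $\langle x \rangle$ is a full Sylow $r$-subgroup of $M$. Being a primitive prime divisor of $q^e-1$ only says that $r$ divides no $q^i-1$ with $i<e$; it says nothing about the multiplicity of $r$ in $q^e-1$ itself, which can be $2$ or more. For instance $11$ is a primitive prime divisor of $3^5-1=2\cdot 11^2$, and Wieferich-type examples occur for larger $e$ as well; in such cases $r^2$ divides $|G|$ (and typically $r^2$ divides $|M|$ too, e.g.\ for the $\mathscr{C}_3$ subgroups containing the relevant torus), so $\langle x \rangle$ of order $r$ is not Sylow in $M$ and your invocation of Sylow conjugacy does not directly move $\langle x^{g^{-1}}\rangle$ onto $\langle x \rangle$. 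Your justification ("the order formula contains $q^e-1$ to the first power only") rules out contributions from the other factors $q^i-1$, but not a square factor of $r$ inside $q^e-1$.

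The paper closes exactly this gap differently: it quotes the fact that the Sylow $r$-subgroups of $G$ (hence of $M$) are cyclic (they lie in a cyclic part of a maximal torus; \cite[Theorem 4.10.2]{GoLySo}). Then $\langle x \rangle$ and $\langle x^{g^{-1}} \rangle$ lie in Sylow $r$-subgroups of $M$ that are conjugate by some $m \in M$, and since a cyclic $r$-group has a unique subgroup of order $r$, conjugation by $m$ must carry $\langle x^{g^{-1}} \rangle$ to $\langle x \rangle$, which gives the conclusion. So your argument can be repaired by replacing "$\langle x \rangle$ is Sylow in $M$" with "Sylow $r$-subgroups are cyclic, hence have a unique subgroup of order $r$"; as written, however, the key step is unjustified and false in general.
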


\begin{proof}
\par We first note that Sylow $r$-subgroups of $G$ are cyclic (this follows from \cite[Theorem 4.10.2]{GoLySo}).
\par Suppose $x \in M \cap M^g$ where $M$ is a maximal subgroup of $G$ and $g \in G$, so $x, x^{g^{-1}} \in M$. We have $\langle x \rangle, \langle x^{g^{-1}} \rangle$ contained in Sylow $r$-subgroups of $M$, conjugate by some $m \in M$. As the Sylow $r$-subgroups are cyclic, $\langle x \rangle$ and $\langle x^{g^{-1}} \rangle$ are also conjugate by $m$. This implies that $mg \in N_{G}(\langle x \rangle)$.
\end{proof}

\begin{corollary}
If $x$ lies in a maximal subgroup $M$ of $G$, the number of $G$-conjugates of $M$ containing $x$ is $\frac{|N_G(\langle x \rangle )|}{|N_M(\langle x \rangle )|}$.
\end{corollary}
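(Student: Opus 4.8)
The statement to be proved is Corollary 6.3: if $x$ lies in a maximal subgroup $M$ of $G$, then the number of $G$-conjugates of $M$ containing $x$ equals $\frac{|N_G(\langle x \rangle)|}{|N_M(\langle x \rangle)|}$. The plan is a standard orbit-counting (double-coset) argument, using Lemma 6.2 as the crucial input that two conjugates of $M$ containing $x$ are related by an element normalizing $\langle x \rangle$.

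First I would let $N = N_G(M)$ and note that since $M$ is maximal and $G$ is simple, $N = M$ (if $M$ were self-normalizing-properly we would have $N=G$, impossible as $M$ is proper and core-free). Thus the $G$-conjugates of $M$ are in bijection with the cosets $G/M$, via $gM \mapsto M^g = g^{-1}Mg$ (or $gMg^{-1}$, fixing a convention). Then I would identify the set $\mathcal{C} = \{ M^g : x \in M^g \}$ of conjugates containing $x$: writing $x \in M^g$ as $x^{g^{-1}} \in M$ (with the chosen convention), this is the set of cosets $gM$ such that $x^{g^{-1}} \in M$.

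The heart of the argument is to show $\mathcal{C}$ is a single orbit under left multiplication by $N_G(\langle x \rangle)$ and then count. Concretely: suppose $x \in M^g$ and $x \in M^{g'}$. By Lemma 6.2 applied to the pair $M, M^{g'g^{-1}}$ (which both contain $x$, since $M^{g'g^{-1}} = (M^{g'})^{g}\ldots$ — here I would be careful to recompute: $x \in M^g \cap M^{g'}$ means $x^{g^{-1}}, x^{(g')^{-1}} \in M$, so $\langle x^{g^{-1}}\rangle$ and $\langle x^{(g')^{-1}}\rangle$ lie in Sylow $r$-subgroups of $M$ and are $M$-conjugate as in the proof of Lemma 6.2), I obtain $m \in M$ with $m \cdot g'g^{-1} \in N_G(\langle x \rangle)$, equivalently $g' M = n g M$ for some $n \in N_G(\langle x \rangle)$. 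Hence $N_G(\langle x\rangle)$ acts transitively on $\mathcal{C}$. Conversely, if $g M \in \mathcal{C}$ and $n \in N_G(\langle x\rangle)$, then $x^{(ng)^{-1}} = (x^{g^{-1}})^{n^{-1}}$ is an $M$-conjugate of an element of $M$? — no: I need that $x \in M^{ng}$, i.e. $x^{g^{-1}n^{-1}} \in M$; since $n \in N_G(\langle x\rangle)$ we have $x^{n^{-1}} \in \langle x \rangle$, and then $x^{n^{-1}g^{-1}} = (x^{n^{-1}})^{g^{-1}}$ is a power of $x^{g^{-1}} \in M$, hence lies in $M$. So $N_G(\langle x\rangle)$ preserves $\mathcal{C}$, and the action is transitive.

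Finally, the orbit-stabilizer theorem gives $|\mathcal{C}| = |N_G(\langle x\rangle)| / |\operatorname{Stab}|$, where the stabilizer of the coset $M \in \mathcal{C}$ (taking $g = 1$, which is legitimate since $x \in M$) under left multiplication by $N_G(\langle x\rangle)$ is $\{ n \in N_G(\langle x\rangle) : nM = M \} = N_G(\langle x\rangle) \cap M = N_M(\langle x\rangle)$. This yields $|\mathcal{C}| = \frac{|N_G(\langle x\rangle)|}{|N_M(\langle x\rangle)|}$, as claimed. I do not anticipate a serious obstacle here; the only point requiring care is bookkeeping of left-versus-right conjugation conventions so that Lemma 6.2 is applied to the correct pair of subgroups and the stabilizer computation comes out as $N_M(\langle x\rangle)$ rather than some conjugate. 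The substantive content — that cyclicity of Sylow $r$-subgroups forces the fusion to be controlled by $N_G(\langle x\rangle)$ — has already been extracted into Lemma 6.2.
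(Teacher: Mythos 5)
Your proposal is correct and is essentially the paper's own argument: transitivity of the $N_G(\langle x\rangle)$-action on the conjugates of $M$ containing $x$ via Lemma 6.1, followed by orbit–stabilizer with stabilizer $N_G(\langle x\rangle)\cap N_G(M)=N_M(\langle x\rangle)$, using that a maximal subgroup of a simple group is self-normalizing. Your recasting of the conjugation action as left multiplication on cosets of $G/M$ is only a bookkeeping difference, not a different route.
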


\begin{proof}
\par Let $C=\{ M^g: g \in G, x \in M^g \}$. Then $N_G(\langle x \rangle )$ acts on $C$ by conjugation, and Lemma 6.1 implies this action is transitive. The stabilizer of $M$ is $N_G(\langle x \rangle) \cap N_G(M) = N_M(\langle x \rangle)$ since $M$ is maximal. Therefore $|C| = \frac{|N_G(\langle x \rangle)|}{|N_M(\langle x \rangle)|}$, completing the proof.
\end{proof}

\begin{lemma}
Let $T \le G$ be a maximal torus containing $x$. Then $C_G(x)=T$.
\end{lemma}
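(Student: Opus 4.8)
The plan is to exploit the fact that $x$ is an element of order $r$, a primitive prime divisor of $q^e-1$, which forces $x$ to act on the natural module $V$ in a very constrained way: on an $e$-dimensional (over the appropriate field) subspace it acts with eigenvalues that are primitive $r$-th roots of unity, and hence irreducibly, while on a complement it acts trivially (or as a scalar, after projectivizing). More precisely, I would first pass to a quasi-simple cover or the relevant matrix group $\hat G$, lift $x$ to an element $\hat x$, and analyze $C_{\hat G}(\hat x)$ via the decomposition $V = V_1 \perp V_0$ where $V_1$ is the sum of the nontrivial $\hat x$-eigenspaces (an $r$-primitive piece of dimension $e$ in the linear case, with the orthogonal/symplectic/unitary cases requiring one to note $V_1$ is nondegenerate of minus type, etc.) and $V_0 = C_V(\hat x)$.

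The key computational step is then to identify $C_{\hat G}(\hat x)$ inside $GL(V_1) \times GL(V_0)$ (intersected with the relevant form group). On $V_1$, since $\hat x$ acts irreducibly with a character field of degree $e$, Schur's lemma gives that the centralizer there is a field extension torus; one checks it has order $q^e - 1$ (or $(q^e-1)/(q+1)$, $q^{e/2}+1$, etc., according to type), which is exactly the cyclic maximal torus $T$ restricted to $V_1$. On $V_0$ the centralizer is all of $Cl(V_0)$, but the crucial point — and the main obstacle — is to rule out that $V_0$ is nonzero, or more precisely to show that any such $V_0$-part is already contained in $T$. This is where the specific choice of $e$ in Table 1 does the work: for $G = PSL_n(q)$ we have $e = n$, so $V_1 = V$ and $V_0 = 0$ outright; for $P\Omega_n^+(q)$ with $e = n-2$ there is a $2$-dimensional fixed space $V_0$ of minus type, whose orthogonal group is cyclic of order $q+1$, and this torus factor must be shown to lie in $T$; similar small-codimension analyses handle the odd-dimensional orthogonal and the unitary cases. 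So the heart of the argument is: (a) show $\dim V_0$ is the small value forced by $e$, (b) show $Cl(V_0)$ together with the $V_1$-torus generates exactly $T$ and nothing larger, i.e. that $T$ is self-centralizing.

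Concretely I would argue: $C_{\hat G}(\hat x) \le C_{GL(V)}(\hat x) = GL_{n/e}(q^e)$ in the linear case (and the analogue with the form), and since $T \le C_{\hat G}(\hat x)$ with $T$ already a maximal torus of order $q^e-1$ filling out the $GL_1(q^e)$ factor, any element centralizing $\hat x$ commutes with the semisimple part $\hat x$ and hence lies in this extension-field subgroup; one then checks the full extension-field subgroup meets $\hat G$ in exactly $T$ — for instance in $PSL_n(q)$ with $n = e$ the subgroup $GL_1(q^n) \cap SL_n(q)$ modulo scalars is precisely the torus of order $(q^n-1)/((q-1)(n,q-1))$. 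In the orthogonal-plus case the same reasoning confines $C_G(x)$ to $(GU_{(n-2)/2}(q) \text{ or } GL_1(q^{n-2})) \times O_2^-(q)$-type, and one verifies the torus $T$ exhausts this. Finally, descend from $\hat G$ to $G$ by quotienting by the center, noting that scalars are central so nothing is lost. The main obstacle is bookkeeping the different families uniformly — keeping straight the form-type of $V_1$ and $V_0$, the exact order of the torus factor on each, and the center/similarity-group indices $a_\epsilon$, $z_\epsilon$, $e_G$ — rather than any deep structural difficulty, since the primitivity of $r$ makes the eigenspace decomposition essentially canonical.
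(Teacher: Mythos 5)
Your plan is sound and would prove the lemma, but it runs along a genuinely different track from the paper's. The paper works in the ambient simple algebraic group $H$ with $(H^F)'=G$: using that the eigenvalues of $x$ form a single Galois orbit of length $e$ (by primitivity of $r$), it conjugates $x$ to an explicit diagonal element $\operatorname{diag}(\lambda^a,\lambda^{aq^\delta},\dots,\lambda^{aq^{e-\delta}},I_k)$ with $k\le 2$ (and $k=2$ only for $P\Omega_n^+(q)$), observes that $C_H(x)$ then lies in the unique maximal torus of $H$ containing $x$, and finishes by taking $F$-fixed points. This handles all families uniformly and delivers, as a byproduct, the uniqueness of the maximal torus through $x$, which is exactly what is used later in Proposition 6.4. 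Your route stays inside the finite matrix group: the decomposition $V=V_1\perp V_0$ and Schur's lemma give $C_{GL(V_1)}(\hat x)\cong\mathbb{F}_{q^{e\delta/e}}$-type field extension group, and the rest is form-type and descent bookkeeping. That is more elementary (no algebraic groups, no Lang--Steinberg descent), but it buys you the family-by-family verifications the paper avoids: in particular $O(V_0)\cong O_2^-(q)$ is \emph{dihedral} of order $2(q+1)$, not cyclic as you state at one point (only $SO_2^-(q)$ is cyclic), so you must use determinant and spinor norm to cut the reflection part out of $C_{\Omega_n^+(q)}(\hat x)$, and similarly kill the $O_1(q)=\{\pm1\}$ factor in the odd-dimensional orthogonal and unitary-$n$-even cases before passing to the quotient by the centre; your step (b) correctly flags this. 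One small point you should add: your computation identifies $C_G(x)$ with one specific torus $T_0$, while the lemma asserts $C_G(x)=T$ for an \emph{arbitrary} maximal torus $T$ containing $x$; since $T\le C_G(x)=T_0$, you still need a word (e.g.\ the uniqueness of the maximal torus of the algebraic group containing $x$, which is where the paper gets it for free, or an order comparison) to conclude $T=T_0$.
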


\begin{proof}
Let $H$ be a simple algebraic group over $\overline{\mathbb{F}_q}$ such that $(H^F)'=G$ for a Steinberg morphism $F$. Also let  $\mathbb{F}_{q^e}^{\times}=\langle \lambda \rangle$. If $\mu \in \mathbb{F}_{q^e}$ is an eigenvalue of $x$, then its Galois-conjugates $\mu^{q^\delta}, \mu^{q^{2\delta}}, \dots$ are also eigenvalues of $x$, where $\delta=2$ if $G$ is unitary and $\delta=1$ otherwise. Therefore, as $x$ is semisimple, $x$ is conjugate in $H$ to $\operatorname{diag}(\lambda^a, \lambda^{aq^\delta}, \lambda^{aq^{2\delta}}, \dots, \lambda^{aq^{e-\delta}}, I_k)$, where $a$ is such that $x$ has order $r$, and $k \le 2$ with equality if and only if $G=P\Omega^+_n(q)$. We find that $C_H(x)$ is contained in a unique maximal torus containing $x$: if $k \le 1$ then $x$ has distinct eigenvalues and the statement is clear, otherwise $k=2, G=P\Omega^+_n(q)$, and the $1$-space is a 2-dimensional torus of type $O_2^+(q)$. Therefore, taking fixed points yields the result. 
\end{proof}

\begin{proposition}
Let $M <_{\operatorname{max}}G$ with $x \in M$, and assume $M \notin \mathscr{S}$. Then $|N_M(\langle x \rangle)| \ge N_M$, where $N_M$ is listed in Table 6. The values of $|N_G(\langle x \rangle)|$ are listed in Table 9. 
\end{proposition}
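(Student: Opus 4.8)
The strategy is to pin down $|N_G(\langle x\rangle)|$ exactly, and then, for each type of $M$ in Table 6, to exhibit inside $N_M(\langle x\rangle)$ a subgroup of order at least $N_M$. The engine for both is Lemma 6.4 together with its proof: $x$ is semisimple with $C_G(x)=T$ a maximal torus, and $x$ is conjugate in the ambient algebraic group to $\operatorname{diag}(\lambda^a,\lambda^{aq^\delta},\dots,\lambda^{aq^{e-\delta}},I_k)$ with $k\le 2$ (and $k=2$ only for $P\Omega_n^+(q)$). Since the Sylow $r$-subgroups of $G$ are cyclic (proof of Lemma 6.1), the Sylow $r$-subgroup of $T$ is cyclic, so its unique subgroup of order $r$, namely $\langle x\rangle$, is characteristic in $T$; hence $N_G(T)\le N_G(\langle x\rangle)$. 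Conversely $N_G(\langle x\rangle)\le N_G(C_G(x))=N_G(T)$, so $N_G(\langle x\rangle)=N_G(T)$ and it suffices to compute $|T|$ and $|N_G(T)/T|$.

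I would carry out this computation first in the ambient general and similarity group. In each of the five families the eigenvalue shape above forces $T$ to be a Singer-type torus supported on an $e$-dimensional non-degenerate subspace of $V$ (arbitrary subspace in the $\operatorname{GL}$ case), together with a fixed part of dimension $k\le 2$; the order of $T$, and the fact that $N(T)/T$ is cyclic of the order recorded in the $N_M$ column of the corresponding $\mathscr{C}_3$ row, are standard facts about maximal tori of classical groups and their Weyl groups (see e.g.\ \cite{KlLi}). The generator of $N(T)/T$ may be taken to be the field automorphism cyclically permuting the Galois orbit of eigenvalues, which visibly normalizes $\langle x\rangle$; the $k\le 2$ fixed part contributes nothing further, since when $k=2$ the two fixed $1$-spaces lie inside an $O_2^+(q)$ as in Lemma 6.4. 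Passing from the ambient group to the projective similarity group, then to $G$ by dividing out scalars and intersecting with the derived (resp.\ spinor) subgroup, multiplies and divides by precisely the quantities $e_G$, $a_\epsilon$, $z_\epsilon$ and $(2,q-1)$ of \cite[\S2]{KlLi}; performing this bookkeeping family by family gives the entries of Table 9.

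For the bound $|N_M(\langle x\rangle)|\ge N_M$ I would go through the Aschbacher classes appearing in Table 6. If $M\in\mathscr{C}_1$, $\mathscr{C}_3$ or $\mathscr{C}_8$, then a Singer-type torus of the relevant factor of $M$ (or of the field-extension, resp.\ subfield, subgroup) is exactly the torus $T$ above -- this is precisely why $e$ was chosen as in Table 1 -- so $T\le M$, and moreover the full torus normalizer $N_G(T)$ is contained in $M$, since the $\mathscr{C}_1$ stabilizers and the field-extension and subfield subgroups each contain the normalizers of the tori supported on the relevant subspace. Hence in the rows where Table 6 records $N_M=|N_G(\langle x\rangle)|$ we have equality $N_M(\langle x\rangle)=N_G(\langle x\rangle)$, and in the rows listing a genuine formula (e.g.\ $GL_{n/2}(q^2).2$, $O_{n-1}(q)\times O_1(q)$, $O_{n/2}(q^2).2$, $PSp_n(q)$, $PSO_n^-(q)$, $PSU_n(q^{1/2})$ inside $PSL_n(q)$) the same torus-normalizer computation carried out inside $M$ yields the stated quantity. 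If $M\in\mathscr{C}_2$ (type $O_1(q)\operatorname{wr}S_n$) or $M\in\mathscr{C}_6$ (types $2^{2k}Sp_{2k}(2)$, $2^{2k}O^\pm_{2k}(2)$), then $r$ equals $n$, $n-1$ or $2^k+1$, and computing the order of $2$ modulo $r$ shows $r\mid|M|$ exactly once; thus $\langle x\rangle$ is a full Sylow $r$-subgroup of $M$ and $|N_M(\langle x\rangle)|\ge|\langle x\rangle|=r=N_M$ trivially.

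The main obstacle is the last part of the second step: getting the constants $e_G,a_\epsilon,z_\epsilon,(2,q-1)$ and the exponents of $q$ exactly right while descending from the ambient classical group to the simple group $G$, uniformly over all the families of Table 1, together with the parallel verification that in the $\mathscr{C}_1,\mathscr{C}_3,\mathscr{C}_8$ rows the whole of $N_G(T)$ -- not merely $T$ itself -- lies in $M$. The torus-normalizer facts and the Sylow $r$-count for the $\mathscr{C}_2,\mathscr{C}_6$ rows are routine once $e$ is fixed.
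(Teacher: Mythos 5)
Your overall route is the same as the paper's: identify $N_G(\langle x\rangle)=N_G(T)$ using $C_G(x)=T$ (Lemma 6.3) together with the cyclicity of the Sylow $r$-subgroups, compute $|T|$ and $|N_G(T):T|$ (the paper does this via \cite[\S 25, Prop.~25.3]{MaTe} and \cite{ButGre} rather than ad hoc Singer-torus bookkeeping), use the trivial bound $|N_M(\langle x\rangle)|\ge r$ in classes $\mathscr{C}_2,\mathscr{C}_6$, and for the remaining classes repeat the torus-normalizer computation inside $M$; for the cyclic top $C_t$ of a $\mathscr{C}_3$ subgroup $M=M_0.t$ the paper uses a Frattini argument where you exhibit the semilinear Frobenius explicitly, which is an acceptable equivalent.

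There is, however, a genuine flaw in your treatment of $\mathscr{C}_1$, $\mathscr{C}_3$, $\mathscr{C}_8$: the blanket claim that ``a Singer-type torus of the relevant factor of $M$ is exactly the torus $T$, so $T\le M$, and moreover $N_G(T)\le M$'' is false outside the rows where Table 6 records $N_M=|N_G(\langle x\rangle)|$. For example, for $M$ of type $PSp_n(q)$ in $G=PSL_n(q)$ the torus $T=C_G(x)$ has order $\tfrac{q^n-1}{(q-1)(n,q-1)}$, while $C_M(x)=T\cap M$ is the much smaller cyclic group of order dividing $q^{n/2}+1$; a cyclic group of order $q^n-1$ cannot preserve a symplectic form, so $T\not\le M$, and indeed Table 6 lists $N_M=\tfrac{n(q^{n/2}+1)}{(2,q-1)}$, strictly smaller than $|N_G(\langle x\rangle)|$. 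The same failure occurs for the $\mathscr{C}_1$ rows of type $O_{n-1}(q)\times O_1(q)$ and $O_{n-1}(q)$ in $P\Omega^+_n(q)$, where the $O^\pm_2(q)$-part of $T$ acting on the $2$-dimensional fixed space of $x$ does not lie in $M$ (whence the missing factor $\tfrac{q+1}{(2,q-1)^2}$ in $N_M$). Had $N_G(T)\le M$ held in these rows you would get $N_M(\langle x\rangle)=N_G(\langle x\rangle)$, contradicting the table. You partially rescue yourself by saying that in such rows you would redo the torus-normalizer computation inside $M$ (which is exactly what the paper does, writing $N_M(\langle x\rangle)=N_{Cl_{n-k}(q)}(T')\times Cl_k(q)$ in $\mathscr{C}_1$ and arguing analogously in $\mathscr{C}_8$), but as written your justification is internally inconsistent: the assertion that the relevant torus of $M$ ``is exactly $T$'' must be replaced by the correct statement that $C_M(x)=T\cap M$ is in general a proper subtorus, and the containment $N_G(T)\le M$ should only be claimed where it actually holds (the $\mathscr{C}_3$ rows and the $\mathscr{C}_1$ rows with $N_M=|N_G(\langle x\rangle)|$).
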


\renewcommand{\arraystretch}{1.7}
\setlength\LTleft{4.85cm}

\begin{longtable}{c c}
\caption {Orders of normalizers of $x$} \\
$G$  & $ |N_G(\langle x \rangle)|$ \\   \hline
$PSL_n(q)$ & $\frac{n(q^n-1)}{(q-1)(n, q-1)}$ \\
$PSp_n(q)$ & $ \frac{n(q^\frac{n}{2}+1)}{(2, q-1)}$ \\
$P\Omega^+_n(q)$ & $\frac{(n-2)(q^{\frac{n}{2}-1}+1)(q+1)}{a_+(2, q-1)^2}$ \\
$P\Omega_n^-(q)$ & $\frac{n(q^\frac{n}{2}+1)}{a_-(2, q-1)}$ \\
$P\Omega_n(q)$ & $\frac{(n-1)(q^{\frac{n-1}{2}}+1)}{2}$ \\
$PSU_n(q), n$ odd & $\frac{n(q^n+1)}{(q+1)(n, q+1)}$ \\
$PSU_n(q), n$ even & $\frac{(n-1)(q^{n-1}+1)}{(n, q+1)}$ \\ \hline
\end{longtable}
\setlength\LTleft{0cm}

\begin{proof}
We first consider $N_G(\langle x \rangle)$. By Lemma 6.3 there is a unique maximal torus $T$ of $G$ containing $x$. We have $N_G(\langle x \rangle) \le N_G(T)$ by Lemma 6.3, and conversely the normalizer of $T$ will normalize the unique Sylow $r$-subgroup of $T$ containing $x$, yielding $N_G(\langle x \rangle)=N_G(T)$. By \cite[\S~25]{MaTe}, $T$ corresponds to an element $w$ in the Weyl group $W$ of $G$, and  Proposition 25.3 in the same section gives $|C_W(w)|=|N_G(T):T|$. The values of $|T|$ and $|N_G(T):T|$ can be computed from \cite{ButGre}. Hence $|N_G(\langle x \rangle)|$ as in Table 9. 

\par We now consider $N_M(\langle x \rangle)$, and proceed case by case for each Aschbacher class of subgroups $M$ containing $x$ with $r \mid |M|$. Such classes are listed in Table 6.

\par Consider $M \in \mathscr{C}_1$ of type $Cl_{n-k}(q) \times Cl_{k}(q)$ for some classical group $Cl_m(q)$ of the same type as $G$ with $k \le 2$. Then $N_M(\langle x \rangle)$ is of the form $ N_{Cl_{n-k}(q)}(\langle x \rangle) \times Cl_{k}(q)$. If the maximal torus in $Cl_{n-k}(q)$ containing $x$ is $T'$, then, by the same reasoning as for $N_G(\langle x \rangle)$ in first paragraph, the normalizer of $x$ in $M$ is of the form $N_{Cl_{n-k}(q)}(T') \times Cl_k(q)$, and using \cite{ButGre} yields the result.

\par For $M \in \mathscr{C}_2$ or $\mathscr{C}_6$ we use the obvious bound $|N_M(\langle x \rangle)| \ge r$.

\par Consider $M \in \mathscr{C}_3$ of the form $M_0.t$ where $M_0=Cl_k(q^t)$, a classical group of the same type as $G$. Then $T \le M_0$, and $N_{M_0}(\langle x \rangle)=N_{M_0}(T)$ as above. It remains to consider $\frac{M}{M_0}$. By Lemma 6.3, $C_G(x)=T$, and so there is a unique Sylow $r$-subgroup $P$ of $M$ containing $x$. Therefore $N_M(\langle x \rangle)=N_M(P)$. By the Frattini argument, $M=M_0 N_M(P)$, and so
\begin{align*} 
 \frac{N_M(\langle x \rangle)}{N_{M_0}(\langle x \rangle)} = \frac{M_0N_M(\langle x \rangle)}{M_0} = \frac{M_0N_M(P)}{M_0} = \frac{M}{M_0}=C_t.
\end{align*}

\noindent Hence $N_M(\langle x \rangle)=N_{M_0}(\langle x \rangle).t$, showing that for $M \in \mathscr{C}_3$ we have $N_G(\langle x \rangle)=N_M(\langle x \rangle)$. For the remaining cases $M \in \mathscr{C}_3$ a similar method yields the result.

\par For $M \in \mathscr{C}_8$, we use the same method as we did for $N_G(\langle x \rangle)$ in the first paragraph to obtain the lower bound $N_M$.  \end{proof}

\begin{proposition}
Apart from 4 possible exceptions, Theorem 2 holds for $G$ in the following cases: 
\vspace{-0.4cm}
\begin{align*}
&PSL_n(q), n \ge 9; \\ &PSp_n(q), n \ge 12; \\ &P\Omega^\epsilon_n(q) \  (n \ \text{even}), n \ge 14; \\ &P\Omega_n(q) \ (nq \ \text{odd}), n \ge 13; \\ &PSU_n(q), n \ge 8. 
\end{align*} 
\noindent The possible exceptions are $G=PSp_{12}(2), P\Omega^+_{14}(2), P\Omega^+_{16}(2), P\Omega^+_{18}(2)$.
\end{proposition}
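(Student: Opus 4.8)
The plan is to combine inequality (1.2) with the explicit data assembled in Sections 3--5. For each family of $G$ in the list and each $M <_{\operatorname{max}} G$ with $x \in M$, Corollary 6.2 tells us that the number of $G$-conjugates of $M$ containing $x$ is $|N_G(\langle x \rangle)|/|N_M(\langle x \rangle)|$, and Proposition 6.4 bounds this above by $|N_G(\langle x \rangle)|/N_M$ for $M \notin \mathscr{S}$, with $|N_G(\langle x \rangle)|$ read off from Table 9. Hence (1.2) gives
\begin{align*}
Q_2(G,x) \le \sum_{\substack{M <_{\operatorname{max}} G \\ r \mid |M|}} \frac{|N_G(\langle x \rangle)|}{|N_M(\langle x \rangle)|} \cdot \frac{i_2(M)}{i_2(G)},
\end{align*}
and the strategy is to show the right-hand side is $< 1$ for $n$ large in each case. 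We split the sum according to Aschbacher class plus the $\mathscr{S}$ contribution: for $M \notin \mathscr{S}$ we use the rows of Table 6 ($c_M$ classes, each contributing at most $(|N_G(\langle x\rangle)|/N_M)(I_2(M)/i_2(G))$), multiplying by $c_M$; for $M \in \mathscr{S}$ we use Corollary 4.3 to bound the number of classes by $C_\mathscr{S}$ (Table 8) and Proposition 5.1(ii),(iii) to bound $i_2(M)$ by $q^{2n+4}$ or $(n+2)!$, times the number of $G$-conjugates, which is at most $|N_G(\langle x\rangle)|$ (using $|N_M(\langle x\rangle)| \ge r \ge 1$). The lower bound $i_2(G) \ge I_2(G)$ from Proposition 3.1 (Table 4) goes in the denominator throughout.

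Concretely, I would go family by family. Take $G = PSL_n(q)$ as the model: $|N_G(\langle x\rangle)| \le n q^{n-1}$ roughly, $i_2(G) \ge \tfrac18 q^{\lfloor n^2/2\rfloor}$, and each row of the $PSL_n$ block of Table 6 has $I_2(M)$ of the shape (small polynomial in $q$)$\cdot q^{(\text{something})n^2/(2t) + O(n)}$ with $t \ge 2$, so the ratio $|N_G(\langle x\rangle)| I_2(M)/(N_M \, i_2(G))$ is bounded by $q$ to an exponent that is negative and grows like $-n^2/4$; multiplied by $c_M = O(n)$ classes and summed over the $O(\log n)$-or-so rows this stays $\ll 1$. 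The $\mathscr{S}$ contribution is at most $C_\mathscr{S} \cdot |N_G(\langle x\rangle)| \cdot \max(q^{2n+4},(n+2)!)/i_2(G)$; here $C_\mathscr{S} = (n^2 + \tfrac{21}{4}n - 1)e_G$ and $e_G \le n$, so this is crudely $O(n^5) q^{2n+4} / (\tfrac18 q^{n^2/2})$ (and for the alternating case one checks $(n+2)! < q^{n^2/4}$ for $n$ large), which is $<1$ once $n \ge 9$. The symplectic, orthogonal and unitary families run identically, with the relevant $e$ from Table 1 controlling $|N_G(\langle x\rangle)|$, and the threshold on $n$ in each case is exactly what is needed to absorb the polynomial factors $c_M$, $C_\mathscr{S}$ and the constant $\tfrac18$ or $\tfrac12$ in $I_2(G)$.

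The main obstacle is not any single estimate but the sheer bookkeeping: one must verify, for every $G$-family and every row of Table 6, that the exponent of $q$ in $|N_G(\langle x\rangle)| I_2(M) / (N_M \, I_2(G))$ is negative and dominates the logarithm of the polynomial prefactor, and do so uniformly in $q$ (including $q=2$, where the $q$-powers are weakest and the polynomial factors hurt most). The tightest cases will be $q = 2$ in the even-dimensional orthogonal families $P\Omega^\pm_n(q)$ and the symplectic family, because there $i_2(G)$ has exponent only $\tfrac{n^2}{4} - 1$ (versus $\lfloor n^2/2\rfloor$ for $PSL$) while the $\mathscr{C}_1$-subgroup $O^-_{n-2}(q) \times O^-_2(q)$ has $I_2(M)$ of exponent $\tfrac{n^2 - 4n}{4}$ and the normalizer factor $|N_G(\langle x\rangle)| \approx n q^{n/2}$ is comparatively large; the gap is then only about $q^{4n/4} = q^n$ up to polynomial factors, which forces the larger thresholds $n \ge 12, 13, 14$ and still leaves the four small cases $G = PSp_{12}(2), P\Omega^+_{14}(2), P\Omega^+_{16}(2), P\Omega^+_{18}(2)$ where the inequality $Q_2(G,x) < 1$ cannot be forced by these generic bounds and must be deferred (these are precisely the "4 possible exceptions" in the statement, to be handled later by sharper arguments or direct computation). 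I would therefore organize the proof as a uniform reduction to checking a short list of explicit inequalities in $n$ and $q$, flag where each threshold comes from, and isolate the four residual cases.
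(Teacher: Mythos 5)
Your overall strategy is exactly the paper's: bound $Q_2(G,x)$ via (1.2), split the sum by Aschbacher class, count conjugates containing $x$ with Corollary 6.2 and Proposition 6.4 (Tables 6 and 9), bound $i_2(M)/i_2(G)$ by $I_2(M)/I_2(G)$ via Propositions 3.1 and 5.1, and control the $\mathscr{S}$-contribution with Corollary 4.3, deferring the four listed groups. However, two pieces of your bookkeeping for the $\mathscr{S}$-term are genuinely too crude. First, you bound the number of conjugates of $M\in\mathscr{S}$ containing $x$ by $|N_G(\langle x\rangle)|$, i.e.\ you use $|N_M(\langle x\rangle)|\ge 1$. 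The paper uses $|N_M(\langle x\rangle)|\ge r$ (and $\ge\frac12 r(r-1)$ when $\operatorname{soc}(M)\in\{A_{n+1},A_{n+2}\}$), and since $r\equiv 1 \bmod e$ forces $r$ to be at least roughly $n$, this gains a factor of order $n$ (resp.\ $n^2$) that is needed at the boundary. For example, for $G=PSL_9(2)$ your bound $C_\mathscr{S}\,|N_G(\langle x\rangle)|\,q^{2n+4}/\tfrac18 q^{\lfloor n^2/2\rfloor}$ is about $127\cdot 4599\cdot 2^{22}/(2^{37})\approx 18$, so the claim that the $\mathscr{S}$-contribution is $<1$ once $n\ge 9$ fails as written; with the division by $r=73$ it goes through. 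Second, $C_\mathscr{S}$ in Corollary 4.3 counts only classes with $\operatorname{soc}(M)$ \emph{not} alternating, so your accounting never bounds the number of classes with $\operatorname{soc}(M)\in\{A_{n+1},A_{n+2}\}$; the paper supplies this separately (at most $e_G$ such classes, and none when $G=PSL^\epsilon_n(q)$, using the uniqueness of the irreducible $n$-dimensional representation of $A_{n+1},A_{n+2}$ and the form it preserves). Both gaps are repairable with ingredients you already cite, but as written the threshold claims do not follow.

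Two smaller points. Your diagnosis of where the thresholds and the four exceptions come from is slightly off: for $P\Omega^+_n(2)$ the dominant term is not the $O^-_{n-2}(q)\times O^-_2(q)$ class (which meets $x$ in a unique conjugate and decays like $q^{-(n-1)}$) but the $\mathscr{C}_1$ class of type $O_{n-1}(q)$, $q$ even, where the conjugate multiplier $\approx 2(q+1)$ and the exponent $-n/2$ make the single term exceed $1$ at $q=2$, $n=14,16,18$. Finally, the paper also needs ad hoc observations at some borderline parameters (e.g.\ for $P\Omega^-_{14}(2)$ one notes $r=43$ divides the order of no $\mathscr{S}$-subgroup, so $\Sigma_0$ vanishes); a purely uniform inequality in $(n,q)$ of the kind you describe will not quite cover all cases down to the stated thresholds, so your ``short list of explicit inequalities'' must be supplemented by such case-specific checks.
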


\begin{proof}
\par To prove $G$ is $(2,r)$-generated, by $(1.2)$  it suffices to prove $\sum_{x \in M <_{\text{max}}G} \frac{i_2(M)}{i_2(G)}<1$. 
\par Let $\mu_i$ be a set of $G$-conjugacy class representatives of maximal subgroups $M \in \mathscr{C}_i$ with order divisible by $r$, and $\mu_0$ be a set of $G$-conjugacy class representatives for $M \in \mathscr{S}$ with order divisible by $r$. By Corollary 6.2 we have
\begin{align*}
\sum_{x \in M <_{\text{max}}G} \frac{i_2(M)}{i_2(G)}=\Sigma_1 + \Sigma_2+ \dots + \Sigma_8 + \Sigma_0
\end{align*} 
\noindent where
\begin{align*}
 \Sigma_i = \sum_{ M \in \mu_i } \frac{|N_G(\langle x \rangle)|}{|N_{M}(\langle x \rangle)|} \frac{i_2(M)}{i_2(G)}.
\end{align*}
\noindent The values $|\mu_i|$ and maximal subgroups $M$ contributing to $\Sigma_i, 1 \le i \le 8$ are found in Table 6 by Proposition 4.1. Upper bounds for the number of conjugacy class representatives $M \in \mu_0$ with $\operatorname{soc}(M) \notin \{ A_{n+1}, A_{n+2} \} $ are found in Table 8 by Corollary 4.3. For $M \in \mu_0$ with $\operatorname{soc}(M) \in \{ A_{n+1}, A_{n+2} \},$ $\operatorname{soc}(M)$ has a unique irreducible $n$-dimensional representation over any field (where the characteristic $p \mid n+2$ in the $n+2$ case), preserving an orthogonal (or symplectic in characteristic 2) form (see \cite[\S~5]{KlLi}). Hence the number of such representations is 0 if $G=PSL^\epsilon_n(q),$ and is at most $e_G$ otherwise. 
\par By Proposition 6.4, if $M \notin \mathscr{S}$ then $\frac{|N_G(\langle x \rangle)|}{|N_{M}(\langle x \rangle)|} \le  \frac{|N_G(\langle x \rangle)|}{N_M}$ where $N_M$ is found in Table 6 and $|N_G(\langle x \rangle)|$ is found in Table 9. If $M \in \mathscr{S}$ we use $|N_M(\langle x \rangle)| \ge r$ if $\operatorname{soc}(M) \notin \{A_{n+1}, A_{n+2} \}$, and $|N_M(\langle x \rangle)| \ge \frac{1}{2}r(r-1)$ otherwise. The ratio $\frac{i_2(M)}{i_2(G)}$ is bounded by $\frac{I_2(M)}{I_2(G)}$  by Proposition 3.1 and Proposition 5.1. This leads to an upper bound for each $\Sigma_i$ which can be manipulated into a decreasing function in $n$ and $q$.

\par As an illustration, consider $G=P\Omega^-_n(q)$. By Table 6, 
\begin{align*} 
\sum_{x \in M <_{\text{max}}G} \frac{i_2(M)}{i_2(G)}= \Sigma_3+\Sigma_0.
\end{align*}
 \vspace{-0.3cm}
\par Consider first $\Sigma_3$. In $\mathscr{C}_3$ there are fewer than $\frac{n}{2}$ classes of type $O^-_k(q^t).t$ and there is 1 class of type $GU_\frac{n}{2}(q).2$. By Corollary 6.2 and Table 6, if $M \in \mu_3$ then $M$ is the unique conjugate containing $x$ since $|N_G(\langle x \rangle)|=|N_M(\langle x \rangle)|$. By Table 6, for $M$ of type $O^-_k(q^t).t$ we have $i_2(M) < 2(q^t+1)q^{\frac{n^2}{4t}-t} \le 2(q^2+1)q^{\frac{n^2}{8}-2}$. For $M$ of type $GU_\frac{n}{2}(q).2$ we have $i_2(M) < \frac{2}{z_-}(q+1)^2q^{\frac{n^2}{8}+\frac{n}{4}-2}$. By Table 4, we have $i_2(G) \ge \frac{1}{8}q^{\frac{n^2}{4}-1}$. Therefore
\begin{align*}
\Sigma_{3} &=  \sum_{x \in M \in \mathscr{C}_3}   \frac{i_2(M)}{i_2(G)} \\
&\le \sum_{ M \in \mu_3 } \frac{|N_G(\langle x \rangle)|}{|N_{M}(\langle x \rangle)|} \frac{i_2(M)}{i_2(G)} \\
&\le   \frac{\frac{n}{2}.2(q^2+1)q^{\frac{n^2}{8}-2}+ \frac{2}{z_-}(q+1)^2q^{\frac{n^2}{8}+\frac{n}{4}-2}}{\frac{1}{8}q^{\frac{n^2}{4}-1}} \\
&= \frac{2^3n(q^2+1)}{q^{\frac{n^2}{8}+1}}+\frac{2^4(q+1)^2}{q^{\frac{n^2}{8}-\frac{n}{4}+1}}. \tag{1}
\end{align*}

\par We now consider $\Sigma_0$. By Corollary 4.3 there are at most $(n^2+\frac{21}{4}n-1)e_G$ classes of subgroups $M \in \mu_0$ such that $\operatorname{soc}(M) \notin \{ A_{n+1}, A_{n+2} \}$. As shown above, there are at most $e_G$ classes of subgroups $M \in \mu_0$ such that $\operatorname{soc}(M) \in  \{ A_{n+1}, A_{n+2} \}$. By Corollary 6.2, for $M$ of the first type there are at most $ \frac{n(q^\frac{n}{2}+1)}{ra_-(2,q-1)}$ $G$-conjugates of $M$ containing $x$, and for $M$ of the second type there are at most $\frac{n(q^\frac{n}{2}+1)}{\frac{1}{2}r(r-1)a_-(2,q-1)}$. By Proposition 5.1 we have $i_2(M)<q^{2n+4}$ for $M$ of the first type, and $i_2(M)< (n+2)!$ for the second type. Therefore
\begin{align*}
\Sigma_0 &= \sum_{x \in M \in \mathscr{S}}  \frac{i_2(M)}{i_2(G)} \\
&\le \sum_{ M \in \mu_0 } \frac{|N_G(\langle x \rangle)|}{|N_{M}(\langle x \rangle)|} \frac{i_2(M)}{i_2(G)} \\
&\le \frac{e_Gn(n^2+\frac{21}{4}n-1)(q^\frac{n}{2}+1)q^{2n+4}}{ra_-(2,q-1)\frac{1}{8}q^{\frac{n^2}{4}-1}} + \frac{e_Gn(q^\frac{n}{2}+1)(n+2)!}{\frac{1}{2}r(r-1)a_-(2,q-1)\frac{1}{8}q^{\frac{n^2}{4}-1}} \\
&\le \frac{2^3(2,q-1)(n^2+\frac{21}{4}n-1)(q^\frac{n}{2}+1)}{q^{\frac{n^2}{4}-{2n}-5}} + \frac{2^4(2,q-1)(n+2)!(q^\frac{n}{2}+1)}{nq^{\frac{n^2}{4}-1}}. \tag{2}
\end{align*}

\noindent Using (1), (2) it can then be verified that for $n \ge 16$ and all $q$, or for $n \ge 14$ and $q \ge 3$, we have $\Sigma_3+\Sigma_0<1$. For $n=14, q=2$, we find $r=43$, and so by Table 7 and \cite{GPPS} there are no subgroups $M \in \mathscr{S}$ with order divisible by $r$. Therefore using (1) we find $\Sigma_3 < 1$, and this proves the result for $G=P\Omega^-_n(q), n \ge 14$. Similar arguments deal with all other possibilities for $G$.
\end{proof}

\vspace{-1.6cm}
\section*{\center{7. Proof of Theorem 2 for small $n$}}
\stepcounter{section}

\par We now consider $G$ with $n$ smaller than in Proposition 6.5. To prove Theorem 2 it suffices to prove the following groups are $(2,r)$-generated: 
\vspace{-0.21cm}
\begin{align*}
&\hspace{0.95cm} PSL_8(q); \\
&\hspace{0.95cm} PSp_n(q), n=8, 10 \ \text{and} \ (n,q)=(12, 2) ; \\
&\hspace{0.95cm} P\Omega^\epsilon_n(q), n=8 \ (q \ne 2 \ \text{for} \ \epsilon=+), 10, 12 \ \text{and} \ (n,q, \epsilon)=(14,2, +), (16,2,+), (18,2,+);   \\
&\hspace{0.95cm} P\Omega_n(q) \ (q \ \text{odd}), n=9, 11.
\end{align*}

\vspace{-2.1cm} \hspace{-0.8cm} $(\dagger)$

\vspace{0.99cm}

\begin{proposition}
If $G$ is a group listed in $(\dagger)$ and $M$ is a maximal subgroup of $G$ with order divisible by $r$, then $M$ is conjugate to a group listed in Table 6 ($M \in \mathscr{C}_i$) or a group with socle listed in Table 10 ($M \in \mathscr{S}$). In Table 10, upper bounds $C_{M}$ are given for the number of $G$-classes of subgroups $M$ with the given socle.
\end{proposition}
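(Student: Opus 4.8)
The plan is to apply Aschbacher's theorem and treat the geometric classes $\mathscr{C}_1,\dots,\mathscr{C}_8$ and the class $\mathscr{S}$ separately. Every group $G$ in $(\dagger)$ satisfies the hypotheses of Theorem~2 (indeed $n \ge 8$, and the restriction $q \ne 2$ for $\epsilon = +$ in dimension $8$ excludes $P\Omega_8^+(2)$), so Proposition~4.1 applies verbatim: any maximal subgroup $M \notin \mathscr{S}$ with $r \mid |M|$ is conjugate to one of the subgroups listed in Table~6, with $c_M$ the stated number of $G$-classes. This settles the $\mathscr{C}_i$ part of the statement, and it remains only to enumerate the subgroups $M \in \mathscr{S}$ with $r \mid |M|$ and to bound the number of $G$-classes for each admissible socle.

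For $M \in \mathscr{S}$ we invoke the classification in \cite{GPPS} of subgroups of a classical group whose order is divisible by a primitive prime divisor of $q^e - 1$, writing $S = \operatorname{soc}(M)$. If $S \in \operatorname{Lie}(p')$, then Proposition~4.2 lists the possibilities for $S$ and the admissible $n$ in Table~7; specializing to the finitely many values of $n$ (and, where relevant, of $q$) occurring in $(\dagger)$ leaves only the entries recorded in Table~10, and the bound $C_M$ is obtained exactly as in the proof of Proposition~4.2, by counting $PGL_n(q)$- or $PGO^\epsilon_n(q)$-classes of the relevant low-dimensional representations via \cite{GuTi1}, \cite{GMST}, \cite{HiMal}, \cite{Bu} and multiplying by $e_G$. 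If $S = A_{n'}$ is an alternating group, then \cite{GPPS} bounds $n'$ in terms of $n$: for $n' \ge 9$ one has $n' = n+1$ or $n+2$, each contributing at most $e_G$ classes as in Section~6, while the finitely many cases $n' \le 8$ are isomorphic to groups of Lie type and are absorbed into the previous case or checked directly. Finally, if $S$ is sporadic or $S \in \operatorname{Lie}(p)$, then \cite[Example~2.7]{GPPS} leaves only a short explicit list of candidates once $n \le 18$ is imposed; we record these in Table~10 with the class bound $C_M \le c\, e_G$ of Corollary~4.3, cross-checking against \cite{Atlas} and \cite{ModAtlas}.

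In practice the argument amounts to running through each of the (roughly twenty) groups listed in $(\dagger)$ in turn. For every classical group of dimension at most $12$ the complete list of maximal subgroups — and hence of those of order divisible by $r$ — can be read off directly from \cite{BHRD}, which also records the number of conjugacy classes of each; for $P\Omega^+_{14}(2), P\Omega^+_{16}(2), P\Omega^+_{18}(2)$ we first compute $e$ from Table~1 and a primitive prime divisor $r$ of $2^e - 1$, then apply Aschbacher's theorem together with \cite{GPPS} and the argument above. In each instance one verifies that every maximal subgroup of order divisible by $r$ is of the asserted form, and that the stated $C_M$ is an upper bound for the number of $G$-classes.

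The principal difficulty is bookkeeping rather than anything conceptual: one must ensure that no $\mathscr{S}$-subgroup is overlooked — in particular those in defining characteristic or with exceptional or sporadic socle, for which the uniform order estimates used in Sections~5 and~6 are unavailable — and that the class counts $C_M$ are correct, including the subtle factors arising from $|PGL_n(q):PSL_n(q)|$ or $|PGO^\epsilon_n(q):P\Omega^\epsilon_n(q)|$ and from the number of inequivalent embeddings. The cases with $q = 2$ in dimensions $12, 14, 16, 18$ are the most laborious, since there $r$ is comparatively large and several $\mathscr{S}$-candidates must be eliminated by representation-theoretic constraints.
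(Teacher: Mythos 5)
Your proposal follows essentially the same route as the paper: the geometric classes are handled by Proposition~4.1/Table~6, the groups of dimension at most $12$ are settled by reading the maximal subgroup lists and class counts from \cite{BHRD}, and for $P\Omega^+_{14}(2), P\Omega^+_{16}(2), P\Omega^+_{18}(2)$ the $\mathscr{S}$-subgroups and their $GL_n(q)$-class numbers are taken from \cite{GPPS} and converted to $G$-class bounds via \cite[Corollary~2.10.4]{KlLi}, multiplying by $e_G$. The extra structure you add (splitting the $\mathscr{S}$ analysis by socle type through Proposition~4.2 and Corollary~4.3) is consistent with, but not needed beyond, the paper's citation-based verification.
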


\setlength\LTleft{2.52cm}
\renewcommand{\arraystretch}{1.3}
\footnotesize
\begin{longtable}{c c  c  c }
\caption{Maximal subgroups $M \in \mathscr{S}$ with $r \mid |M|$ for $G$ in $(\dagger)$} \\
$G$  & $\operatorname{soc}(M)$ & Conditions & $C_{M}$\\ \hline
$PSp_8(q)$ &   $PSL_2(17)$ & $q=p$ or $p^2$, $q \ge 9$ or $q=2$, $r=17$  & 2 \\ \hline
$PSp_{10}(q)$ &   $PSL_2(11)$ & $q=p$ or $p^2$, $q$ odd, $r=11$ & 6 \\ 
 & $PSU_5(2)$ & $q=p$ odd, $r=11$ & 2 \\\hline
$PSp_{12}(2)$ & $PSL_2(25)$ & & 1 \\ 
& $A_{14}$ &  & 1 \\ \hline
$P\Omega^+_8(q)$  & $P\Omega_7(q) $ & $q$ odd  & $4$ \\
& $PSp_6(q) $ &  $q$ even & 2 \\
& $PSU_3(q) $ & $q \equiv 2 \mod 3$ & $(2,q-1)^2$ \\
& $ P\Omega^+_8(2)$ & $q=p$ odd, $r=7$ & 4 \\
& $ Sz(8) $ & $q=5$ & 8 \\
& $A_{10}$ & $q=5$ & 12 \\ \hline
$P\Omega^+_{12}(q)$ & $PSL_2(11)$ & $q=p \ge 19$, $r=11$ & 8 \\
& $M_{12}$ & $q=p \ge 5$, $r=11$ & 8 \\
& $A_{13}$ & $q=p$ odd, $r=11$ & 4 \\ \hline
$P\Omega^+_{14}(2)$ & $ PSL_2(13)$ & & $2e_G$ \\
& $ G_2(3)$ & & $e_G$\\
&  $A_{16}$ & & $e_G$\\ \hline
$P\Omega_{10}^-(q)$ &  $PSL_2(11)$ & $q=p \ge 11$, $r=11$ & $(q+1, 4)$\\
& $ A_{11}$ & $q \ne 2$, $r=11$ & $(q+1, 4)$ \\
& $ A_{12}$ & $q=2$ & $1$ \\ \hline
$P\Omega_{12}^-(q)$ &   $PSL_2(13)$ & $q=p$ or $p^3$, $q \ge 8$, $r=13$ & 6 \\
& $PSL_3(3)$ & $q=p$, $r=13$ & $2(q+1, 2)$\\
& $ A_{13} $ & $q \ne 7$, $r=13$ & $(q+1, 2)$  \\ \hline
$P\Omega_9(q)$ & $PSL_2(17)$ & $q=p$ or $p^2$, $r=17$ & $2$ \\ \hline
$P\Omega_{11}(q)$ & $ A_{12}$ & $q=p$, $r=11$ & 2 \\
\hline
\end{longtable}

\normalsize
\begin{proof} 
For $n \le 12$, this is proved using \cite{BHRD}. For the groups $G=P\Omega^+_{14}(2), P\Omega^+_{16}(2), P\Omega^+_{18}(2)$ with $M \in \mathscr{S}$, a list of possibilities for $\operatorname{soc}(M)$ and the number of $GL_n(q)$-classes $c$ for each $\operatorname{soc}(M)$ is obtained from \cite{GPPS}. By \cite[Corollary~2.10.4]{KlLi} we can bound the number of $G$-classes of $M$ by $ce_G$. \end{proof}

\begin{proposition}
Theorem 2 holds for $G$ listed in $(\dagger)$.
\end{proposition}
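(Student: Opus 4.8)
The plan is to prove Proposition 7.3 by the same mechanism as Proposition 6.5, namely by bounding $\sum_{x \in M <_{\max} G} i_2(M)/i_2(G)$ below $1$ using inequality $(1.2)$, but now working explicitly for each group $G$ in the finite list $(\dagger)$ rather than asymptotically in $n$. For each such $G$, the primitive prime divisor $r$ of $q^e-1$ (with $e$ from Table 1) is a fixed small value, and Proposition 7.1 gives the complete list of maximal subgroups $M$ with $r \mid |M|$: those in $\mathscr{C}_i$ appear in Table 6, and those in $\mathscr{S}$ have socle in Table 10. So the first step is, for each $G$, to enumerate this finite collection of maximal subgroup types, read off $c_M$ (or $C_M$), $I_2(M)$, $N_M$ from the tables, and read off $|N_G(\langle x\rangle)|$ from Table 9 and $I_2(G)$ from Table 4.

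The second step is, for each $G$, to assemble the bound
\[
\sum_{x \in M <_{\max} G} \frac{i_2(M)}{i_2(G)} = \Sigma_1 + \dots + \Sigma_8 + \Sigma_0 \le \sum_{M} \frac{|N_G(\langle x\rangle)|}{N_M}\cdot\frac{I_2(M)}{I_2(G)},
\]
where for $M \in \mathscr{S}$ we use $|N_M(\langle x\rangle)| \ge r$ (or $\ge \frac12 r(r-1)$ when $\operatorname{soc}(M)$ is alternating), $i_2(M) < q^{2n+4}$ (or $<(n+2)!$ for alternating socle, or the explicit values from $\mathscr{S}$-character tables where available), and the class counts from Table 10. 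Since $n$ is now bounded, each summand is an explicit rational function of $q$ alone (for the infinite families $PSL_8(q)$, $PSp_8(q)$, $PSp_{10}(q)$, $P\Omega^\epsilon_8(q)$, etc.) or an explicit number (for the finitely many $(n,q)$ pairs like $PSp_{12}(2)$, $P\Omega^+_{14}(2)$). One then verifies $\Sigma_1 + \dots + \Sigma_0 < 1$: for the infinite families this is a routine check that a decreasing-in-$q$ rational function is $<1$ for all $q$ in range (the dominant denominator is $q^{n^2/4}$-ish from $I_2(G)$, which beats the $q^{n^2/8+\dots}$ and $q^{2n+4}$ numerators once $n \ge 8$), and for the finitely many residual $(n,q)$ pairs it is a finite arithmetic computation.

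The main obstacle will be the genuinely small cases where the crude bounds $i_2(M)<q^{2n+4}$ or $i_2(M) \le |M|$ and $I_2(G) = \frac18 q^{\lfloor n^2/2\rfloor}$ etc. are too weak to force the sum below $1$ — in particular the $q=2$ cases ($PSp_{12}(2)$, $P\Omega^+_{14}(2)$, $P\Omega^+_{16}(2)$, $P\Omega^+_{18}(2)$, $P\Omega^-_{10}(2)$, and the $\epsilon=+$, $n=8$ exclusion) and the triality-laden $P\Omega^+_8(q)$, whose $\mathscr{S}$-subgroups ($P\Omega_7(q)$, $PSp_6(q)$, $PSU_3(q)$, etc.) are comparatively large. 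For these I would follow the strategy flagged in the introduction: replace the generic bounds by exact data from \cite{Atlas}, \cite{ModAtlas}, \cite{BHRD}, \cite{GPPS}, and the relevant conjugacy-class references, computing $i_2(M)$ and $i_2(G)$ (and the exact number of conjugates of $M$ containing $x$ via Corollary 6.2) precisely, and if necessary invoking inequality $(1.1)$ directly with $p=r$ rather than the conditional $(1.2)$. The $P\Omega^+_8(2)$ case is handled separately by Lemma 2.2 (it is $(2,5)$-generated), so it is excluded from $(\dagger)$; likewise the small-dimension groups already covered by Lemmas 2.1, 2.2, 2.4 need not be revisited. Once every group in $(\dagger)$ has $\sum_{x\in M<_{\max}G} i_2(M)/i_2(G) < 1$, inequality $(1.2)$ gives $Q_2(G,x)<1$, so $G = \langle x, \sigma\rangle$ for some involution $\sigma$, i.e. $G$ is $(2,r)$-generated, completing the proof.
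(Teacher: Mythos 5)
Your proposal follows essentially the same route as the paper: apply inequality $(1.2)$ case by case for each group in $(\dagger)$, decompose the sum into the $\Sigma_i$ and $\Sigma_0$ contributions using the subgroup data of Proposition 7.1 (Tables 6 and 10), the normalizer orders of Table 9 and Corollary 6.2, and the involution bounds of Propositions 3.1 and 5.1, and then sharpen the generic bounds (exact $i_2(M)$ from \cite{Atlas}/\cite{BHRD}, exact involution counts for wreath-product and alternating-socle subgroups, and the exact class sizes of Table 5 in place of $I_2(G)$) precisely in the small-$q$ cases, notably $q=2$, where the crude estimates fail. This matches the paper's proof, which carries out the same refinements and illustrates them with $P\Omega^+_{12}(q)$ before treating the remaining groups similarly.
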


\begin{proof}
We prove the remaining cases of $G$ are all $(2,r)$-generated in the usual way, proceeding case by case for $n$. We use similar bounds for $\Sigma_i, 1 \le i \le 8$, to those found in the proof of Proposition 6.5, though since $n$ is fixed in each case we are able to improve the bounds on $\Sigma_2$ and $\Sigma_3$ in the following way:
\begin{itemize}

\item Consider $M \in \mathscr{C}_2$ of the form $O_1(q) \operatorname{wr} S_n$. Then we have $i_2(M) \le 2^{n-1}(i_2(S_n)+1) = 2^{n-1}\sum_{k=0}^{\lfloor \frac{n}{2} \rfloor} \frac{n!}{2^k k! (n-2k)!}$.
\item Let $M \in \mathscr{C}_3$ be of the form $Cl_k(q^t).t$ for a classical group $Cl_k(q^t)$ of the same type as $G$. Instead of bounding the number of prime divisors of $n$ (and hence the number of classes) by $dn$ for some $d \le 1$, we instead calculate the exact number of prime divisors and bound the number of involutions contained in each class separately using Table 6. 
\end{itemize}
\par We can improve the bound on $\Sigma_0$ more significantly: Table 10 lists possible $\operatorname{soc}(M)$ for subgroups $M$ contributing to $\Sigma_0$. If $\operatorname{soc}(M) \notin \{A_{n+1}, A_{n+2} \}$ then we bound $i_2(M)$ using either $|M|$ or Proposition 2.3 if applicable, rather than $q^{2n+4}$ from \cite{Li}. If $\operatorname{soc}(M) =A_{n'}$ for $n'=n+1$ or $n+2$, then $i_2(M) \le i_2(S_{n'}) \le \sum_{k=0}^{\lfloor \frac{n'}{2} \rfloor} \frac{n'!}{2^k k!(n'-2k)!}$.
\par We note that for each $n$ it may not be possible to prove $Q_2(G,x)<1$ for all $q$ using the lower bound $i_2(G) \ge I_2(G)$. However, for specific $q$ we can prove $Q_2(G,x)<1$ by instead using the lower bound for $i_2(G)$ listed in Table 5.
\par As an illustration, consider $G=P\Omega^+_{12}(q)$. By Table 6 and Table 10, 
\begin{align*}
\sum_{x \in M <_{\text{max}}G} \frac{i_2(M)}{i_2(G)} =\Sigma_1 + \Sigma_2+\Sigma_3+\Sigma_0.
\end{align*}
\vspace{-0.4cm}
\par We first consider $\Sigma_1$. Let $\mu_1$ be a set of conjugacy class representatives of subgroups $M \in \mathscr{C}_1$ such that $r \mid |M|$. There exists a unique $M \in \mu_1$ of type $O_{10}^-(q) \times O_2^-(q)$ and either two representatives of type $O_{11}(q) \times O_1(q) \ (q \ \text{odd})$ or one of type $O_{11}(q) \ (q \ \text{even})$. Using Tables 4, 6 and 9 we bound $c_M \frac{ |N_G(\langle x \rangle)|}{|N_M(\langle x \rangle)|} \frac{i_2(M)}{i_2(G)}$ by $c_M \frac{|N_G(\langle x \rangle)|}{N_M} \frac{I_2(M)}{I_2(G)}$, and we find that the subgroups giving the largest contribution to $\Sigma_1$ occur when $q$ is odd. This leads to 

\vspace{-0.5cm}
\begin{align*}
\Sigma_1 &= \sum_{M \in \mu_1} \frac{|N_G(\langle x \rangle)|}{|N_M(\langle x \rangle)|} \frac{i_2(M)}{i_2(G)} \\
&\le \sum_{\substack{M \in \mu_1 \ \text{of type}\\  O_{10}^-(q) \times O_2^-(q) }} \frac{|N_G(\langle x \rangle)|}{|N_M(\langle x \rangle)|} \frac{i_2(M)}{i_2(G)} + \sum_{\substack{M \in \mu_1 \ \text{of type} \\  O_{11}(q) \times O_1(q) }}\frac{|N_G(\langle x \rangle)|}{|N_M(\langle x \rangle)|} \frac{i_2(M)}{i_2(G)} \\
&\le \frac{2(q+1)^2q^{24}}{\frac{1}{8}q^{35}} + \frac{2(q+1)}{(2, q-1)^2} \times \frac{\frac{4}{z_+}(q+1)q^{29}}{\frac{1}{8}q^{35}} \\
&= \frac{2^4(q+1)^2}{q^{11}}+\frac{2^6(q+1)^2}{(2,q-1)^3q^6}.
\end{align*}
\vspace{-0.5cm}
\par We now consider $\Sigma_2$. Let $\mu_2$ be a set of conjugacy class representatives for $M \in \mathscr{C}_2$ such that $r \mid |M|$. By Table 6, $| \mu_2| \le 4$, and for each $M \in \mu_2$ we have $i_2(M) \le  2^{11}\sum_{k=0}^{6} \frac{12!}{2^k k! (12-2k)!}=2^{14}.17519$ from the above discussion. By Table 6 we require $q=p \ne 2$ for such subgroups to exist. This yields
\begin{align*}
\Sigma_2 &=\sum_{M \in \mu_2} \frac{|N_G(\langle x \rangle)|}{|N_M(\langle x \rangle)|} \frac{i_2(M)}{i_2(G)} \\
&\le  4 \times \frac{10(q^5+1)(q+1)}{4a_+r} \times \frac{2^{11}\sum_{k=0}^{6} \frac{12!}{2^k k! (12-2k)!}}{\frac{1}{8}q^{35}} \\
&\le \frac{2^{16}.17519(q+1)(q^5+1)}{q^{35}}.
\end{align*}
\vspace{-0.5cm}

\par We now consider $\Sigma_3$. Let $\mu_3$ be a set of conjugacy class representatives for $M \in \mathscr{C}_3$. By Table 6 there are 2 such classes of the form $GU_6(q).2$, and for each we have $i_2(M)<\frac{2}{z_+}(q+1)^2q^{19}$. Each class has a unique conjugate containing $x$ by Corollary 6.2. Therefore
\begin{align*}
\Sigma_3 &= \sum_{M \in \mu_3} \frac{|N_G(\langle x \rangle)|}{|N_M(\langle x \rangle)|}  \frac{i_2(M)}{i_2(G)} \\
&\le 2 \times \frac{2(q+1)^2q^{19}}{\frac{1}{8}q^{35}} \\
&= \frac{2^5(q+1)^2}{q^{16}}.
\end{align*}
\vspace{-0.5cm}
\par We now consider $\Sigma_0$. By Table 10, we can assume $r=11$. Let $\mu_0$ be a set of conjugacy class representatives for $M \in \mathscr{S}$ with $r \mid |M|$. By Table 10 there are at most $8$ classes with socle $PSL_2(11)$, $8$ classes with socle $M_{12}$, and $4$ classes with socle $A_{13}$. By Corollary 6.2 and Table 9, for $M$ of the first or second type there are at most $\frac{10(q^5+1)(q+1)}{11a_+(2, q-1)^2}$ $G$-conjugates of $M$ also containing $x$, and for $M$ of alternating type there are at most $\frac{10(q^5+1)(q+1)}{\frac{1}{2}11(11-1)a_+(2, q-1)^2}$ conjugates containing $x$. Using \cite{BHRD} and \cite{Atlas} we calculate $i_2(M) \le 55, 190080, 272415$ for $\operatorname{soc}(M)=PSL_2(11), M_{12}, A_{13}$ respectively. Therefore, as $q=p$ is odd in all cases, 
\begin{align*}
\Sigma_0 &= \sum_{M \in \mu_0} \frac{|N_G(\langle x \rangle)|}{|N_M(\langle x \rangle)|}  \frac{i_2(M)}{i_2(G)} \\
&=  8 \times \frac{10(q^5+1)(q+1)}{ra_+(2, q-1)^2} \times \frac{55}{\frac{1}{8}q^{35}} + 8 \times \frac{10(q^5+1)(q+1)}{ra_+(2, q-1)^2} \times \frac{190080}{\frac{1}{8}q^{35}}  \\ & \hspace{3cm} + 4 \times \frac{10(q^5+1)(q+1)}{\frac{1}{2}r(r-1)a_+(2, q-1)^2} \times \frac{272415}{\frac{1}{8}q^{35}} \\
&= \frac{2^3.5.11.3593(q^5+1)(q+1)}{q^{35}}.
\end{align*}
\vspace{-0.5cm}
\par We see that for $q \ge 3$ we have $Q_2(G,x) \le \Sigma_1+\Sigma_2+\Sigma_3+\Sigma_0<1$. It therefore suffices to prove $\Sigma<1$ for $q=2$. Computing similar bounds for $\Sigma_i$ using the lower bound for $i_2(G)$ found in Table 5 instead of $I_2(G)$ from Table 4 yields the result.
\par Calculations for the remaining $G$ are similar.
\end{proof}

\vspace{-0.3cm}

\end{document}